\tikzset{diagram/.style={matrix of math nodes, inner sep=0pt, row
    sep=#1, column sep=2.5em, text height=1.5ex, text depth=.25ex,
    nodes={inner sep=1ex}}}
\tikzset{diagram/.default=2.5em}
\newtheorem{thm}{Theorem}
\newtheorem{prop}[thm]{Proposition}
\newtheorem{lemma}[thm]{Lemma}
\newtheorem{cor}[thm]{Corollary}
\newtheorem*{thm*}{Theorem}
\theoremstyle{definition}
\newtheorem{rmk}{Remark}
\newtheorem*{related works}{Related Works}
\newtheorem*{question*}{Question}
\theoremstyle{definition}
\newtheorem{question}{Question}
\numberwithin{equation}{section}
\newcommand{\CC}{\mathbb{C}}
\newcommand{\RR}{\mathbb{R}}
\newcommand{\QQ}{\mathbb{Q}}
\newcommand{\A}{\mathbb{A}}
\newcommand{\ZZ}{\mathbb{Z}}
\newcommand{\s}{\sigma} 
\newcommand{\ld}{\lambda}
\newcommand{\fq}{\mathbb{F}_q}
\newcommand{\fqbar}{\overline{\ff}_q}
\newcommand{\ff}{\mathbb{F}}
\newcommand{\zt}{Z(V,t)}
\newcommand{\zto}{\mathring{Z}(V,t)}
\newcommand{\cvq}{\mathrm{Conf}_{n}V(\fq)}
\newcommand{\pv}{\mathrm{PConf}_{n}V}
\newcommand{\cv}{\mathrm{Conf}_{n}V}
\newcommand{\cnq}{\mathrm{Conf}_{n}(\fq)}
\newcommand{\cq}{\mathrm{Conf}_{n}(\fq)}
\newcommand{\cc}{\mathrm{Conf}_{n}(\CC)}
\newcommand{\pc}{\mathrm{PConf}_{n}(\CC)}
\newcommand{\cn}{\mathrm{Conf}_{n}}
\newcommand{\F}{\mathrm{Frob}_q}
\newcommand{\PP}{\mathbb{P}}
\newcommand{\cM}{\mathrm{Conf}_n M}
\newcommand{\pM}{\mathrm{PConf}_n M}
\newcommand{\tn}{\mathcal{T}_n}
\newcommand{\tq}{\mathcal{T}_n(\fq)}
\newcommand{\tc}{\mathcal{T}_n(\CC)}
\newcommand{\Tc}{\Tn(\CC)}
\newcommand{\Tn}{\widetilde{\mathcal{T}}_n}
\newcommand{\gl}{\mathrm{GL}_{n}}
\newcommand{\glq}{\mathrm{GL}_{n}(\fq)}
\begin{document}

\nocite{*}

\title{Twisted cohomology of configuration spaces and spaces of maximal tori via point-counting}

\author{Weiyan Chen}
\date{\today}

\maketitle

\begin{abstract}
We consider two families of algebraic varieties $Y_n$ indexed by natural numbers $n$: the configuration space of  unordered $n$-tuples of distinct points on $\CC$, and the space of  unordered $n$-tuples of linearly independent lines in $\CC^n$. Let $W_n$ be any sequence of virtual $S_n$-representations given by a character polynomial, we compute $H^i(Y_n; W_n)$ for all $i$ and all $n$ in terms of double generating functions. One consequence of the  computation is a new recurrence phenomenon: the stable twisted Betti numbers $\lim_{n\to\infty}\dim H^i(Y_n; W_n)$ are  linearly recurrent in $i$. Our method is to compute twisted point-counts on the $\fq$-points of certain algebraic varieties, and then pass through the Grothendieck-Lefschetz fixed point formula to prove results in topology. We also generalize a result of Church-Ellenberg-Farb about the configuration spaces of the affine line to those of a general smooth variety.



\end{abstract}

\section{Introduction}

We consider two families of spaces indexed by natural numbers $n$. The first family is the configuration space of ordered $n$-tuples of distinct points in a manifold $M$:
\[\pM:=\{(x_1,\cdots,x_n)\in M^n: \ x_i\ne x_j,\ \ \forall i\ne j\}.\]
The symmetric group $S_n$ acts freely on $\pM$ by permuting the ordered points. The quotient $\cM:=\pM/S_n$ is the configuration space of \emph{unordered} $n$-tuples of distinct points. 
The second family is the space of $n$ linearly independent lines in $\CC^n$:
\[\Tn(\CC):=\{(L_1,\cdots, L_n): \text{$L_i$ a line in $\CC^n$,\ $L_1,\cdots, L_n$ linearly independent}\}.\]
$S_n$ acts freely on $\Tc$ by permuting the ordered lines. The quotient $\tc:=\Tc/S_n$ can be identified with the space of maximal tori in $\gl(\CC)$. See Section \ref{at tori} for more details.

Every normal $S_n$-cover $X\to Y$ gives a natural bijection between representations of $S_n$ and local systems on $Y$ that become trivial when restricted to $X$. Thus, every $S_n$-representations give rise to a local system on $\cM$ and on $\tc$.

\begin{question}[\bf{Twisted Betti numbers}]\label{q1}
What are the twisted Betti numbers $\dim H^i(\cM;W_n)$ and $\dim H^i(\tc;W_n)$ for each $i$ and $n$, and for each representation $W_n$ of $S_n$?
\end{question}

These twisted Betti numbers have geometric, arithmetic, and combinatorial meaning (see \emph{e.g.} Sections 2 and 5 in \cite{ICM}).  The program of computing these numbers dates back to the work of Arnol'd in the 1960s. For example, 
if $W_n$ is the trivial, the sign, or the standard representations of $S_n$, then $\dim H^i(\cc;W_n)$ have been known for all $i$ and $n$, by the work of Arnol'd \cite{Arnol'd}, Cohen \cite{Cohen}, and Vassiliev \cite{V}. 
However, even in the special case when $M=\CC$, there is no known formula of $\dim H^i(\cc;W_n)$,  for every $i$ and $n$ and $W_n$. In his 2014 ICM talk, Farb proposed a list of problems, one of which (Problem 2.1 in \cite{ICM}) is equivalent to Question \ref{q1}. See Remark \ref{icm} below for more details. 

This paper contains two collections of results: one topological and one arithmetic. We will use the arithmetic results to obtain results in topology. 
\\

\noindent\textbf{Topological results:}
\begin{itemize}
\item Theorem \ref{intro main} computes $\dim H^i(\cc;W_n)$ and $\dim H^i(\tc;W_n)$ for all $i$ and all $n$, and for all representations $W_n$ of $S_n$. This answers Question \ref{q1} for $M=\CC$.

\item In Corollary \ref{intro cor}, we discover a new recurrence phenomenon: the stable twisted Betti numbers $\lim_{n\to\infty}\dim H^i(\cc;W_n)$ and $\lim_{n\to\infty}\dim H^i(\tc;W_n)$ satisfy linear recurrence relations in $i$.
\end{itemize}
\noindent\textbf{Arithmetic results:}
\begin{itemize}
\item Theorem \ref{main} computes weighted point-counts on the $\fq$-points of $\cv$ where $V$ is a smooth variety. 

\item Corollary \ref{n limit} states that when $n\to\infty$, the weighted point-counts on the $\fq$-points of $\cv$ converges in some appropriate sense. This gives a new proof of a recent theorem of Farb-Wolfson and generalizes a theorem of Church-Ellenberg-Farb.
\end{itemize}

\subsection{Computing twisted Betti numbers.} 

We will consider Question \ref{q1} in a more general setting, where $W_n$ is allowed to be a \emph{virtual} $S_n$-representation, \emph{i.e.} a formal $\QQ$-linear combination of $S_n$-representations. Virtual representations are in natural bijection with the set of class functions of $S_n$. In this case, $\dim H^i(\cc; {W}_n)$ and $\dim H^i(\tn(\CC); {W}_n)$ are now well-defined  rational numbers since the cohomology functor is additive in coefficients. 

For each positive integer $k$, define $X_k:\coprod_{n=1}^\infty S_n\to\ZZ$ to be the class function with $X_k(\s)$ the number of $k$-cycles in the unique cycle decomposition of $\s\in S_n$. A \emph{character polynomial} is a polynomial $P\in\QQ[X_1,X_2,\cdots]$. It defines a class function on $S_n$ for all $n$. Define the \emph{degree} of a character polynomial by letting each variable $X_k$ to have degree $k$.  For   a sequence of nonnegative integers $\ld=(\ld_1,\cdots,\ld_l)$, define a character polynomial by
$${X\choose \ld}:={X_1\choose \ld_1}{X_2\choose \ld_2}\cdots{X_l\choose \ld_l}.$$
Then $X\choose\ld$ has degree $|\ld|:=\sum_{k=1}^l k\ld_k$.
For each fixed $n$, every class function on $S_n$ is a $\QQ$-linear combination of character polynomials of the form $X\choose \ld$. For example, the indicator function on the conjugacy class of $\s\in S_n$ is  $X\choose\ld$ where $\ld=(X_1(\s),\cdots,X_n(\s))$.
Therefore, to answer Question \ref{q1}, it suffices to consider the case $W_n:={X\choose\ld}$.

\begin{thm}[\textbf{Generating function for twisted Betti numbers}]
\label{intro main}
 Let $\mu$ be the classical M\"obius function, and let $M_k(z^{-1}):= \frac{1}{k}\sum_{j|k} \mu(\frac{k}{j})z^{-j}$ be the $k$-th necklace polynomial in $z^{-1}$. For any sequence of nonnegative integers $\ld=(\ld_1,\ld_2,\cdots,\ld_l)$, we have the following two equations of formal power series in two variables $z$ and $t$.
\begin{align*}
&\text{(I) }\ \ \ \ \sum_{n=0}^\infty\sum_{i=0}^\infty \dim H^i(\cc;{X\choose\ld})(-z)^it^n=\frac{1-zt^2}{1-t}\prod_{k=1}^l \dbinom{M_k(z^{-1})}{\ld_k}\bigg(\frac{(tz)^k}{1+(tz)^k}\bigg)^{\ld_k}\\
&\text{(II) }\ \ \ \ \sum_{n=0}^\infty\sum_{i=0}^\infty \frac{\dim H^{2i}(\tn(\CC);{X\choose\ld})}{(1-z)(1-z^2)\cdots(1-z^n)}z^it^n=\bigg[\prod_{k=1}^l\frac{1}{\ld_k!}\bigg(\frac{t^k}{k(1-z^k)}\bigg)^{\ld_k}\bigg]\cdot\prod_{j=0}^\infty\frac{1}{1-tz^j}
\end{align*} 
%
%
%
\end{thm}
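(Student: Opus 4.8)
The plan is to prove Theorem \ref{intro main} by reducing each cohomological statement to a point-counting statement over $\fq$ and then invoking the Grothendieck–Lefschetz trace formula, exactly as advertised in the introduction. For part (I), I would first use that $\cc$ (and more precisely $\conf_n\A^1$) is a smooth variety whose cohomology is pure in each degree and whose $\ell$-adic cohomology carries a Tate-type Frobenius action — indeed $\dim H^i(\conf_n\A^1(\CC);\QQ)$ is governed by eigenvalue $q^i$ on $H^{2i}$? No: the right statement is that $\conf_n\A^1$ has cohomology in which Frobenius acts on $H^i$ by $q^i$ up to a shift, a fact going back to Lehrer and to the Arnol'd computation; combined with $S_n$-equivariance this lets one identify $\sum_i \dim H^i(\cc;W_n)(-z)^i$ with a twisted point-count $\sum_{x\in\conf_n\A^1(\fq)}$ of the corresponding character value, with $q$ replaced by the formal variable $z$. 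The same mechanism applies to $\tc$: the space of maximal tori has cohomology concentrated in even degrees with a known Frobenius weight, so the left-hand side of (II), including the denominator $\prod(1-z^j)$ which will turn out to be $|\glq|$ divided by a power of $q$, matches a twisted count over $\glq$ of tori weighted by the character polynomial.

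The core computation is then purely arithmetic: evaluate $\sum_{x}\left(X\atop\ld\right)(\s_x)$ where $\s_x$ is the Frobenius permutation on the geometric points lying over $x$. The key combinatorial input is that an $\fq$-point of $\conf_n\A^1$ is a monic squarefree polynomial of degree $n$, which factors uniquely into monic irreducibles; the Frobenius cycle type records how many irreducible factors of each degree $k$ appear, and there are exactly $M_k(q)$ monic irreducibles of degree $k$ (the necklace polynomial evaluated at $q$). So the generating function $\sum_n t^n \sum_x \left(X\atop\ld\right)(\s_x)$ factors as an Euler product over $k$: for each $k$ one chooses $\ld_k$ of the available $M_k(q)$ irreducibles to be "marked" (that is one reads off the binomial $\binom{M_k(q)}{\ld_k}$) while the remaining factors range freely, which produces $\prod_k \binom{M_k(q)}{\ld_k}\bigl(\tfrac{t^k}{\dots}\bigr)^{\ld_k}\cdot(\text{free part})$; substituting $q\mapsto z^{-1}$ together with the appropriate normalizing powers of $z$ coming from the weight/cohomological-degree bookkeeping produces the stated formula, with the prefactor $\frac{1-zt^2}{1-t}$ emerging from the free (unmarked) part summed over all degrees. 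For (II), the analogous statement is that maximal $\fq$-tori in $\gl(\fq)$ are classified by conjugacy classes in $S_n$, equivalently by their Frobenius cycle type, with the count of tori of a given type expressed via $|\glq|$ divided by centralizer orders; Steinberg's formula for the number of maximal tori, reorganized as a generating function, yields the factor $\prod_{j\ge 0}\frac{1}{1-tz^j}$ and the $\frac{1}{\ld_k!}\bigl(\frac{t^k}{k(1-z^k)}\bigr)^{\ld_k}$ terms from the cycle-index of $S_n$.

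Concretely the steps are: (1) recall/establish the purity and Frobenius-eigenvalue statements for $H^*(\conf_n\A^1)$ and $H^*(\tn)$ so that Grothendieck–Lefschetz turns Betti-number generating functions into point-count generating functions in the formal variable $z$ (playing the role of $q$); (2) identify $\fq$-points of $\conf_n\A^1$ with monic squarefree polynomials and translate $\left(X\atop\ld\right)(\F)$ into a statement about counts of irreducible factors; (3) perform the Euler-product/exponential-formula computation using $|\{\text{monic irreducibles of degree }k\}| = M_k(q)$, collecting the marked and unmarked contributions to obtain (I); (4) for tori, use the parametrization of maximal $\fq$-tori by $F$-conjugacy classes in $S_n$ and Steinberg's count, feed it through the cycle index of the symmetric group, and match with (II); (5) check the normalization constants — the powers of $z$, the shift between $H^i$ and $H^{2i}$, and the denominator $\prod(1-z^j)$ — are exactly right by comparing low-degree or small-$n$ cases.

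The main obstacle I anticipate is not the Euler-product bookkeeping but the passage between topology and arithmetic with twisted coefficients: one must justify that the Grothendieck–Lefschetz formula applies $S_n$-equivariantly and that the relevant cohomology is "of Tate type with the expected weights" so that the substitution $q \leftrightarrow z$ is legitimate as an identity of polynomials (not merely an asymptotic or a single-prime statement). For $\conf_n\A^1$ this rests on the known structure of its cohomology as a permutation-type $S_n$-representation in each degree with pure weight; for $\tn$ one needs the analogous fact about the space of maximal tori, which is where I would be most careful, likely citing the relevant computation in Section \ref{at tori}. Getting the two normalizing prefactors ($\frac{1-zt^2}{1-t}$ on one side, $\prod_{j\ge0}\frac{1}{1-tz^j}$ on the other) exactly right — rather than up to an overall unit — is the second place where a sign or an off-by-one in the weight can creep in, and I would pin these down by matching the $\ld = 0$ (untwisted) specialization against the classical Arnol'd/Lehrer formulas.
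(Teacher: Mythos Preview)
Your plan is essentially the paper's own proof: apply the Grothendieck--Lefschetz bridge (equations (\ref{GL intro}) and (\ref{GL tori}), quoted from Church--Ellenberg--Farb) to turn each side into a weighted point-count, evaluate that count by an Euler-product argument over irreducible polynomials (respectively, Fulman's generating function for maximal tori), and then promote the resulting identity from ``valid at every prime power $q$'' to an identity of formal power series in $z$.

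One clarification on what you flag as the ``main obstacle.'' You conflate two separate issues. The first --- that Grothendieck--Lefschetz applies $S_n$-equivariantly and that the cohomology is Tate of the expected weight --- is exactly the content of (\ref{GL intro}) and (\ref{GL tori}), which the paper imports from \cite{CEF} as black boxes; you do not need to reprove purity. The second --- passing from ``the identity holds at $z=q^{-1}$ for every prime power $q$'' to ``the identity holds as formal power series in $z$'' --- is not a weight or purity statement at all. The paper handles it by the elementary Lemma~\ref{cal 2}: for each fixed $n$, the coefficient of $t^n$ on either side is a power series in $z$ converging near $0$; two such series agreeing on $\{q^{-1}:q\text{ prime power}\}$, which accumulates at $0$, must coincide. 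Your suggestion of pinning down normalizations by checking $\ld=0$ or small $n$ is a good sanity check but is not the mechanism that proves the formal identity.

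A minor difference in execution: where you describe the Euler-product step combinatorially (``mark $\ld_k$ of the $M_k(q)$ irreducibles''), the paper instead writes down the generating function $F(x_1,\dots,x_l,t)=\sum_n\sum_C x_1^{X_1(\sigma_C)}\cdots x_l^{X_l(\sigma_C)}t^n$, expresses it as a product via unique factorization, and then applies $(\partial/\partial x)^\ld$ at $x=(1,\dots,1)$. The two are equivalent, and the differentiation trick makes the appearance of $\binom{M_k}{\ld_k}$ and of the prefactor $\frac{Z(\A^1,t)}{Z(\A^1,t^2)}=\frac{1-qt^2}{1-qt}$ (which becomes $\frac{1-zt^2}{1-t}$ after the substitution $t\mapsto tq^{-1}$) completely mechanical.
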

In (I), all negative power of $z$ in $M_k(z^{-1})$ will cancel with other positive powers of $z$ so that the right-hand-side of the equality is indeed a series in $z$ and $t$. In (II), we only consider $H^{2i}(\tn(\CC);{X\choose\ld})$ because $H^{2i+1}(\tn(\CC);{X\choose\ld})=0$ by the work of Borel \cite{Borel}. 
%
%
%
%
%

\begin{rmk}[\textbf{Representation stability}]
\label{icm}
Farb proposed the following problem (Problem 2.1 in \cite{ICM}): for a manifold $M$, compute the decomposition of $H^i(\pM;\QQ)$ into a sum of irreducible representations of $S_n$. Remarkably, such a decomposition does not depend on $n$ when $n$ is sufficiently large. This result of \emph{representation stability} was first proved by Church-Farb \cite{CF} for $M=\CC$, and later by Church \cite{Church} for $M$ any connected orientable manifold of finite type (see also \cite{CEF2} for a different proof). Farb proposed a second problem (Problem 3.5 in \cite{ICM}) of computing the \emph{stable} decomposition of $H^i(\pM;\QQ)$ when $n$ is large. 
%
Note that for any $S_n$-representation $W_n$, the transfer isomorphism associated to the $S_n$-cover $\pM\to\cM$ gives:
 \begin{equation}
\label{transfer}
\dim H^i(\cM; {W}_n)=\langle H^i(\pM;\QQ),W_n \rangle_{S_n},
\end{equation} 
where $\langle U,V\rangle_{S_n}$ stands for the usual inner product of two $S_n$-representations $U$ and $V$. Hence, computing the multiplicities of $W_n$ in the decomposition of $H^i(\pc;\QQ)$ are equivalent to computing twisted Betti numbers of $\cM$ in $W_n$.

The simplest nontrivial case for Farb's two questions is when $M=\CC$. Theorem \ref{intro main} (I) reduces Farb's two questions in this case  to computing Taylor expansions of  rational functions. See Section \ref{examples} for more discussion and examples.

\end{rmk}

\begin{rmk}[\textbf{Twisted homological stability}]
Representation stability for $\pc$ implies twisted homological stability for $\cc$. Precisely, Church-Ellenberg-Farb (Theorems 1.9 in \cite{CEF2}) proved that for any character polynomial $P$ and for each fixed $i$, the twisted Betti numbers $\dim H^i(\cc;P)$ stabilize when $n$ is sufficiently large. Later, Hersh-Reiner gave a different proof of the stability of $\dim H^i(\cc;P)$ with an improved stable range in $n$ (Theorem 4.3 in \cite{HR}). We will give a third proof of this stability result in Corollary \ref{st} using Theorem \ref{intro main}. The implied stable range is a small improvement of that obtained by Hersh-Reiner, and is optimal (see Remark \ref{sharp} below). The three papers (\cite{CEF2}, \cite{HR} and the present one) land at the same result from three totally different points of views respectively: topological, combinatorial, and arithmetic. 
\end{rmk}

\paragraph{Linear recurrence of stable twisted Betti numbers in $\pmb i$.} 
Besides finding new proofs of homological stability, we discover a new phenomenon: the stable cohomology of $\cc$ and $\tc$  as $n\to\infty$ with twisted coefficients are linearly recurrent in $i$. 

\begin{cor}[\textbf{Linear recurrence of stable twisted Betti numbers}]
\label{intro cor}
Fix an arbitrary character polynomial $P\in \QQ[X_1,X_2,\cdots]$. Let $N=\deg P$.
\begin{enumerate}[label=(\Roman*)]
\item  For each $i$, denote 
$\alpha_i:=\lim_{n\to\infty}\dim H^i(\cc;P).$
There exist  integers $c_1,\cdots,c_N$  such that for all $i\ge N+2$, 
$$\alpha_i= c_1\alpha_{i-1}+c_2\alpha_{i-2}+\cdots+c_N\alpha_{i-N}.$$

\item  For each $i$, denote  
$\beta_i:=\lim_{n\to\infty}\dim H^{2i}(\tc;P).$
There exist integers $d_1,\cdots,d_N$   such that for all $i\ge N$,
$$\beta_i= d_1\beta_{i-1}+d_2\beta_{i-2}+\cdots+d_N\beta_{i-N}.$$
\end{enumerate}
\end{cor}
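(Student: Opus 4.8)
The plan is to read the $n\to\infty$ limit off the two generating functions in Theorem \ref{intro main} and then apply the standard dictionary between rational generating functions and linear recurrence relations.

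First I would reduce to $P=\binom{X}{\ld}$: cohomology is additive in the coefficient system and every character polynomial of degree $N$ is a finite $\QQ$-linear combination of the $\binom{X}{\ld}$ with $|\ld|\le N$, so it suffices to treat $P=\binom{X}{\ld}$ and recombine. By Corollary \ref{st} the twisted Betti numbers $\dim H^i(\cc;\binom{X}{\ld})$ are eventually constant in $n$, so every coefficient of $b_n(z):=\sum_i\dim H^i(\cc;\binom{X}{\ld})(-z)^i$ stabilizes; hence $(1-t)\sum_n b_n(z)t^n$ is, coefficient by coefficient in $z$, a polynomial in $t$, and setting $t=1$ telescopes to $\sum_i\alpha_i(-z)^i$. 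Multiplying the right-hand side of Theorem \ref{intro main}(I) by $(1-t)$ cancels the factor $\tfrac1{1-t}$, so evaluating at $t=1$ should yield
\[
\sum_i\alpha_i(-z)^i=(1-z)\prod_{k=1}^{l}\binom{M_k(z^{-1})}{\ld_k}\Bigl(\tfrac{z^k}{1+z^k}\Bigr)^{\ld_k}.
\]
The same manipulation handles the tori once one knows $\dim H^{2i}(\tc;\binom{X}{\ld})$ stabilizes in $n$ (which follows from the shape of Theorem \ref{intro main}(II), or from known stability for spaces of maximal tori): the truncated product $\prod_{m=1}^{n}(1-z^m)$ agrees with $\prod_{m\ge1}(1-z^m)$ up to order $z^{n+1}$, so $\bigl(\prod_{m=1}^n(1-z^m)\bigr)^{-1}\sum_i\dim H^{2i}(\tc;\binom{X}{\ld})z^i$ stabilizes coefficientwise to $\bigl(\prod_{m\ge1}(1-z^m)\bigr)^{-1}\sum_i\beta_iz^i$, while multiplying the right-hand side of (II) by $(1-t)$ deletes the $j=0$ factor $\tfrac1{1-t}$ and sends $\prod_{j\ge1}\tfrac1{1-tz^j}$ to $\bigl(\prod_{m\ge1}(1-z^m)\bigr)^{-1}$ at $t=1$; matching the two sides should give the clean identity
\[
\sum_i\beta_iz^i=\prod_{k=1}^{l}\frac{1}{\ld_k!\,k^{\ld_k}(1-z^k)^{\ld_k}}.
\]

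Next I would record that both right-hand sides are rational in $z$ with denominators of degree $N$. For the configuration spaces, write $\binom{M_k(z^{-1})}{\ld_k}\bigl(\tfrac{z^k}{1+z^k}\bigr)^{\ld_k}=\tfrac{z^{k\ld_k}\binom{M_k(z^{-1})}{\ld_k}}{(1+z^k)^{\ld_k}}$, and observe that $z^kM_k(z^{-1})=\tfrac1k\sum_{j\mid k}\mu(k/j)z^{k-j}$ is a polynomial of degree $<k$ with nonzero constant term, so $z^{k\ld_k}\binom{M_k(z^{-1})}{\ld_k}$ is a polynomial of degree $\le k\ld_k-1$. Hence $\sum_i\alpha_i(-z)^i=\tfrac{A(z)}{\prod_k(1+z^k)^{\ld_k}}$ with $\deg A\le|\ld|=N$ and denominator of degree $N$; for the tori, $\sum_i\beta_iz^i$ has constant numerator and denominator $\prod_k(1-z^k)^{\ld_k}$ of degree $N$. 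In both cases the denominator is a product of factors $1\pm z^k$, hence has constant term $1$ and integer coefficients, and for a general $P$ one takes the common denominator of the finitely many $\binom{X}{\ld}$ occurring. Finally I would invoke the elementary fact: if $\sum_i a_iz^i=\tfrac{A(z)}{B(z)}$ with $B\in\ZZ[z]$, $B(0)=1$, $\deg B=d$, then $a_i=-\sum_{j=1}^{d}B_j\,a_{i-j}$ for every $i>\deg A$; applying it after the harmless substitution $z\mapsto-z$ (which only flips the signs of the $B_j$) produces integer recurrences of order $N$ (pad with zeros if $\deg B<N$), valid for $\alpha_i$ once $i\ge N+2$ and for $\beta_i$ once $i\ge N$, as claimed.

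The main obstacle will be the degree bookkeeping in the last paragraph: bounding $\deg A$ and checking that, after clearing a common denominator for a general character polynomial $P$, the order of the recurrence is still governed by $N$ and the range of validity comes out exactly as stated — this is where the particular shape $1\pm z^k$ of the denominators and the vanishing of the leading coefficient of $z^kM_k(z^{-1})$ are really used. The secondary point to verify is that the coefficientwise-in-$z$ series in $t$ that we set equal to $1$ genuinely terminate, i.e.\ that passing to the limit commutes with the closed forms of Theorem \ref{intro main}; this is exactly where homological and representation stability enter. Everything else is formal.
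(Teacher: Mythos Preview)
Your proposal is correct and follows essentially the same route as the paper: multiply the double generating function of Theorem~\ref{intro main} by $(1-t)$, evaluate at $t=1$ to obtain the stable generating function in $z$ (the paper phrases this via Lemma~\ref{Taylor} rather than Corollary~\ref{st}, but the content is identical), and then read off the recurrence from the degree of the rational denominator $\prod_k(1\pm z^k)^{\ld_k}$. Your numerator-degree bookkeeping is in fact a touch sharper than the paper's (you observe $\deg\bigl(z^{k\ld_k}\binom{M_k(z^{-1})}{\ld_k}\bigr)\le k\ld_k-1$, whereas the paper only records $\deg A\le |\ld|+1$), and your flagged concern about the common denominator for a general $P$ is one the paper also leaves implicit.
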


For example, if we let $\alpha_i:=\lim_{n\to\infty}\dim H^i(\cc;{\bigwedge}^2\QQ^{n-1})$ where $\QQ^{n-1}$ is the standard representation of $S_n$, then $\alpha_i$ satisfies the linear recurrence relation:
$$\alpha_i=2\alpha_{i-1}-2\alpha_{i-2}+2\alpha_{i-3}-\alpha_{i-4}.$$
See Section \ref{examples} for more details. 

\begin{rmk}[\bf Topological proof?]
We deduce Corollary \ref{intro cor} from Theorem \ref{intro main} by explicitly calculating the generating functions of $\alpha_i$ and $\beta_i$ as rational functions. The proof of Theorem \ref{intro main} uses point-counting, hence crucially depends on the fact that $\cc$ and $\tc$ are algebraic varieties. Is there any proof of Corollary \ref{intro cor} using only topology? Are there other examples of recurrent stable twisted Betti numbers in $i$?
\end{rmk}

\paragraph{Method: point-counting over finite fields.} 
The method in this article combines ideas from two beautiful papers: one by Church-Ellenberg-Farb \cite{CEF} and the other by Fulman \cite{Fulman}. Church-Ellenberg-Farb observed that there is a remarkable bridge, provided by the Grothendieck-Lefschetz fixed point theorem in \'etale cohomology, between cohomology in local coefficients (topology) and weighted point-counts on varieties  over  finite fields (arithmetic). Furthermore, they apply representation stability in topology to prove that certain weighed point-counts converge. Later, Fulman used a different method to improve the  arithmetic calculations stated in \cite{CEF} and obtained certain ``finite $n$" formulas. In this paper, we will systematically extend Fulman's calculations of weighted point-counts, and combine it with the approach of Church-Ellenberg-Farb but in the opposite direction: we use point-counting to compute cohomology. 

The idea of using point-counting to study the topology of configuration spaces dates back at least to the work of Lehrer-Kisin \cite{LK}, and is also used in Section 4.3 of \cite{CEF}. Our results are continuations of the  theme developed by  Lehrer-Kisin and Church-Ellenberg-Farb: structures in the cohomology (\emph{e.g.} stability and recurrence) are often reflected in the arithmetic of corresponding varieties, and \emph{vice versa}.

%

\subsection{Weighted point-counts on configuration spaces of smooth varieties.}

Fulman's method in \cite{Fulman} allows us to generalize a result of Church-Ellenberg-Farb as follows. Let $\cv$  be the configuration space of unordered $n$-tuples of distinct points on a smooth variety $V$ defined over $\ZZ$. When $V$ is the affine line, $\cn\A^1$ is just $\cn$ as discussed above\footnote{For brevity we will consistently use $\cn$ to abbreviate for $\cn\A^1$ throughout the paper.}. In general, every class function of $S_n$ gives a function $\cvq\to\QQ$, which can be viewed as a weighting (see Section \ref{set up} for more details). The following theorem computes weighted point-counts on $\cvq$ in terms of the zeta function $\zt$ of $V$ over $\fq$.


\begin{thm}[\bf Weighted point-counts on $\pmb{\cv}$]
\label{main}
Let $V$ be a smooth, connected variety over $\ZZ$ of positive dimension, and let $q$ be any odd prime power. Let $\mu$ be the M\"obius function, and define $M_k(V,q) := \frac{1}{k}\sum_{m|k}\mu(\frac{k}{m}) |V(\ff_{q^m})|$ for each $k$. 
For any sequence of nonnegative integers $\ld=(\ld_1,\cdots,\ld_l)$, we have the following equality of formal power series in $t$:
\begin{equation}
\label{thm eq}
\sum_{n=0}^{\infty} \Bigg[\sum_{C\in \cvq} \dbinom{X}{\ld}(\s_C) \Bigg] t^n= \frac{\zt}{Z(V,t^2)}\cdot\prod_{k=1}^l \dbinom{M_k(V,q)}{\ld_k}\Bigg(\frac{t^k}{1+t^k}\Bigg)^{\ld_k}
\end{equation}
\end{thm}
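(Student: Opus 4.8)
The plan is to turn the weighted point-count into a purely combinatorial generating-function identity by first giving a geometric description of the $\fq$-points of $\cv$. Since $S_n$ acts freely on $\pv$, the quotient map $\pv\to\cv$ is finite étale, so a point $C\in\cvq$ is the same as a $\F$-stable unordered $n$-tuple of distinct $\fqbar$-points of $V$, together with the conjugacy class of the monodromy permutation $\s_C\in S_n$ recording how $\F$ permutes a chosen ordered lift. Decomposing the underlying set of $\fqbar$-points into $\langle\F\rangle$-orbits shows that $C$ is precisely an effective reduced (squarefree) divisor of degree $n$ on $V_\fq$, i.e.\ a finite set of \emph{distinct} closed points whose degrees sum to $n$; moreover the cycle type of $\s_C$ is exactly the multiset of degrees of these closed points, so $X_k(\s_C)$ equals the number of degree-$k$ closed points occurring in $C$.

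With this dictionary, the inner sum $\sum_{C\in\cvq}\binom{X}{\ld}(\s_C)$ becomes a sum over squarefree degree-$n$ divisors of $\prod_k\binom{\#\{\text{degree-}k\text{ points of }C\}}{\ld_k}$. I would then recall the classical Möbius inversion applied to $|V(\ff_{q^m})|=\sum_{d\mid m}d\cdot(\#\{\text{degree-}d\text{ closed points of }V_\fq\})$, which gives that the number of closed points of $V_\fq$ of degree exactly $k$ is $M_k(V,q)$. Since a squarefree divisor amounts to an \emph{independent} choice, for each degree $d\ge1$, of a subset of the degree-$d$ closed points, the full two-variable sum factors as an Euler-type product over $d$: for $d>l$ the local factor is $\sum_{S}t^{d|S|}=(1+t^d)^{M_d(V,q)}$, and for $1\le d\le l$ it is $\sum_{j}\binom{M_d(V,q)}{j}\binom{j}{\ld_d}t^{dj}$.

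The only genuine computation is the elementary identity $\sum_{j}\binom{N}{j}\binom{j}{m}x^j=\binom{N}{m}x^m(1+x)^{N-m}$ (from $\binom{N}{j}\binom{j}{m}=\binom{N}{m}\binom{N-m}{j-m}$), which rewrites each weighted local factor, for $1\le d\le l$, as $\binom{M_d(V,q)}{\ld_d}\big(\tfrac{t^d}{1+t^d}\big)^{\ld_d}(1+t^d)^{M_d(V,q)}$. Multiplying all local factors, the pieces $(1+t^d)^{M_d(V,q)}$ reassemble over all $d\ge1$ into $\prod_{d\ge1}(1+t^d)^{M_d(V,q)}$, which via the Euler product $\zt=\prod_{d\ge1}(1-t^d)^{-M_d(V,q)}$ equals $\zt/Z(V,t^2)$; what remains is exactly $\prod_{k=1}^l\binom{M_k(V,q)}{\ld_k}\big(\tfrac{t^k}{1+t^k}\big)^{\ld_k}$, which is \eqref{thm eq}.

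I expect the main difficulty to be bookkeeping rather than conceptual: carefully verifying that the pullback of the class function $\binom{X}{\ld}$ along $\cvq\to\{\text{conjugacy classes of }S_n\}$ is literally the stated product of binomials in closed-point counts, and handling the degenerate cases ($n=0$, degrees $d$ with $M_d(V,q)=0$, and $\ld_d>M_d(V,q)$) so that the rearrangement of the infinite product and all manipulations are legitimate equalities of formal power series in $t$. The hypotheses that $V$ is smooth, connected, and of positive dimension are what guarantee that $\cv$ is a variety with the expected set of $\fq$-points and that $\zt$ admits the Euler product used above.
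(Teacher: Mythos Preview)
Your proposal is correct and follows essentially the same approach as the paper: both arguments rest on the Euler-product decomposition of a squarefree divisor on $V_{\fq}$ into distinct closed points, identify $X_k(\s_C)$ with the number of degree-$k$ closed points in $C$, and then compute the local factor at each degree $k$. The only cosmetic difference is that the paper packages the weights into a multivariate generating function $F(x_1,\ldots,x_l,t)$ and extracts $\binom{X}{\ld}$ by applying $(\partial/\partial x)^{\ld}$ at $x_i=1$, whereas you compute the weighted local factor directly via the binomial identity $\sum_j\binom{N}{j}\binom{j}{m}x^j=\binom{N}{m}x^m(1+x)^{N-m}$; these two computations are equivalent and yield the same expression $\binom{M_k(V,q)}{\ld_k}(t^k)^{\ld_k}(1+t^k)^{M_k(V,q)-\ld_k}$.
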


Thanks to Weil conjectures (proved by Dwork, Grothendieck, Deligne \emph{et al}.), $\zt$ is a rational function in $t$ with a simple pole at $t=q^{-\dim V}$, which is of the smallest absolute value among all other poles or zeros of $\zt$. By examining the location of poles in the generating sequence (\ref{thm eq}), we see that any point-count on $\cvq$ weighted by a character polynomial converges as $n\to\infty$ in the following sense.

\begin{cor}[\bf Convergence of weighted point-counts]
\label{n limit}
With the same assumptions as in Theorem \ref{main} and letting $d$ be the dimension of the variety $V$, we have:
\begin{enumerate}[label=(\alph*)]
\item Define $\zto$ to be the rational function $\zt\cdot(1-q^dt)$ in $t$. Then
\begin{equation}
\label{cor1}
\lim_{n\to\infty}\frac{1}{q^{nd}}\sum_{C\in\cvq} {X\choose\ld}(\s_C) = \frac{\mathring{Z}(V,q^{-d})}{Z(V,q^{-2d})}\prod_{k=1}^l \dbinom{M_k(V,q)}{\ld_k}\Bigg(\frac{1}{1+q^{kd}}\Bigg)^{\ld_k}
\end{equation}
In particular, for any character polynomial $P$ the following limit exists:
\begin{equation}\label{conv}
\lim_{n\to\infty}\frac{1}{q^{nd}}\sum_{C\in\cvq} {P}(\s_C).
\end{equation}
\item The expected value of $X\choose\ld$ as a random variable on $\cvq$ converges:
$$\lim_{n\to\infty}\frac{1}{|\cvq|}\sum_{C\in\cvq} {X\choose\ld}(\s_C)=\prod_{k=1}^l\dbinom{M_k(V,q)}{\ld_k}\Bigg(\frac{1}{1+q^{kd}}\Bigg)^{\ld_k}$$

\end{enumerate}
\end{cor}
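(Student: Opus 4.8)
The plan is to deduce both statements directly from the rational generating function of Theorem~\ref{main} by locating its dominant singularity. Set
\[F(t):=\sum_{n=0}^\infty\Bigg[\sum_{C\in\cvq}\binom{X}{\ld}(\s_C)\Bigg]t^n,\]
so that Theorem~\ref{main} identifies $F(t)$ with the rational function $\frac{\zt}{Z(V,t^2)}\cdot G(t)$, where $G(t):=\prod_{k=1}^l\binom{M_k(V,q)}{\ld_k}\big(t^k/(1+t^k)\big)^{\ld_k}$. First I would dispose of the degenerate case: if $\binom{M_k(V,q)}{\ld_k}=0$ for some $k$, then $F\equiv 0$ and the right-hand sides of \eqref{cor1} and of the limit in (b) also vanish, so from now on assume every $\binom{M_k(V,q)}{\ld_k}\neq 0$; then $G$ is holomorphic and nonzero at $t=q^{-d}$. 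The one external input is the consequence of the Weil conjectures recalled just after Theorem~\ref{main}: $\zt$ has a simple pole at $t=q^{-d}$ whose modulus is strictly smaller than that of every other zero or pole of $\zt$. In particular $\mathring{Z}(V,q^{-d})=\lim_{t\to q^{-d}}(1-q^dt)\zt$ is finite and nonzero, and $Z(V,q^{-2d})$ is finite and nonzero (indeed $q^{-2d}$ lies inside the disc of convergence of $\zt$, where $\zt$ is positive).

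Second, I would identify the dominant pole of $F$. Every zero or pole of $Z(V,t^2)$ occurs at a $t$ with $|t|^2\geq q^{-d}$, hence $|t|\geq q^{-d/2}>q^{-d}$ (using $d=\dim V\geq 1$), and $G$ has poles only on $|t|=1$. Therefore, as a rational function, $F$ is holomorphic on the closed disc $|t|\leq q^{-d}$ except for a single simple pole at $t=q^{-d}$, which is genuine by the non-vanishing statements above. Expanding $F$ in partial fractions, the term $B/(1-q^dt)$ with
\[B=\lim_{t\to q^{-d}}(1-q^dt)F(t)=\frac{\mathring{Z}(V,q^{-d})}{Z(V,q^{-2d})}\,G(q^{-d})\]
contributes $Bq^{nd}$ to $[t^n]F$, while every remaining partial-fraction term comes from a pole of modulus strictly larger than $q^{-d}$ and so contributes $o(q^{nd})$. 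Hence $\lim_{n\to\infty}q^{-nd}[t^n]F(t)=B$; substituting $G(q^{-d})=\prod_k\binom{M_k(V,q)}{\ld_k}\big(q^{-kd}/(1+q^{-kd})\big)^{\ld_k}$ and multiplying numerator and denominator of each factor by $q^{kd}$ yields exactly \eqref{cor1}. The limit \eqref{conv} then exists for an arbitrary character polynomial $P$ because $P$ is a finite $\QQ$-linear combination of terms $\binom{X}{\ld}$ and the limit exists for each.

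For part (b), I would take $\ld$ to be the empty sequence in Theorem~\ref{main}, so that $\binom{X}{\ld}\equiv 1$ and $G\equiv 1$, giving $\sum_n|\cvq|\,t^n=\zt/Z(V,t^2)$; the same singularity analysis gives $\lim_{n\to\infty}q^{-nd}|\cvq|=\mathring{Z}(V,q^{-d})/Z(V,q^{-2d})$, which is strictly positive (a nonzero limit of the nonnegative quantities $q^{-nd}|\cvq|$), so in particular $|\cvq|\neq 0$ for $n$ large. Dividing the limit computed in (a) by this one cancels the factor $\mathring{Z}(V,q^{-d})/Z(V,q^{-2d})$ and leaves $\prod_k\binom{M_k(V,q)}{\ld_k}\big(1/(1+q^{kd})\big)^{\ld_k}$, as claimed. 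Since all the analytic content sits in Theorem~\ref{main}, the only thing requiring care here is the bookkeeping of the previous paragraph---checking that passing to $Z(V,t^2)$, together with the unit-circle poles contributed by $G$, never produces a competing singularity on $|t|=q^{-d}$---and this is immediate from the stated location of the zeros and poles of $\zt$.
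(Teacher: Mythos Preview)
Your argument is correct and follows essentially the same route as the paper: identify the right-hand side of Theorem~\ref{main} as a rational function whose unique pole of smallest modulus is the simple pole at $t=q^{-d}$ coming from $\zt$, then read off the limit of $q^{-nd}[t^n]$ from the residue there; part~(b) follows by specializing to the trivial character polynomial and dividing. The paper packages the analytic step as a separate lemma (if $A(t)=H(t)/(1-ct)$ with $H$ holomorphic past $|c|^{-1}$, then $a_n/c^n\to H(c^{-1})$), whereas you invoke the same fact via partial fractions; you are also a bit more explicit than the paper in disposing of the degenerate case $\binom{M_k(V,q)}{\ld_k}=0$ and in checking that neither $Z(V,t^2)$ nor $G(t)$ introduces a competing singularity on $|t|\le q^{-d}$.
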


\begin{rmk}[\bf Related works]
The convergence of (\ref{conv}) in the special case when $V=\A^1$ was first proved by Church-Ellenberg-Farb (Theorem 1 in \cite{CEF}). Part (a) generalizes their result to a general smooth variety.
It concurs with the recent work of Farb-Wolfson, where they extend the topological approach of \cite{CEF} and  gives a different formula for the left-hand-side of (\ref{cor1}) in terms of the \'etale cohomology of $\pv$(Theorem B in \cite{FW}). Our proof, inspired by the work of Fulman, is different from the topological approach in \cite{CEF} and \cite{FW}. 
We obtain not only the asymptotic formula as $n\to\infty$ (Corollary \ref{n limit}), but also a generating function for all $n$ (Theorem \ref{main}). 
\end{rmk}

\begin{rmk}[\bf Probabilistic interpretation and analogs in number theory]
\label{random variable}
Part (b) of Corollary \ref{n limit} has the following probabilistic intepretation: the functions $X_1,X_2,X_3, \cdots$, viewed as random variables on $\cvq$, tends to independent random variables with binomial distribution as $n\to\infty$. This is a geometric analog of the following fact in number theory: the $p$-adic orders, for $p$ any prime number, of a random integer chosen uniformly from  $\{1,2,\cdots,n\}$  tend to be independent random variables with geometric distributions as $n\to\infty$. More results about weighted point-counts on $\cvq$ (and other related spaces) motivated by this probabilisitic point of view will be presented in the forthcoming work of the  author \cite{next}.
\end{rmk}

\paragraph{Acknowledgment.}
The author thanks Sean Howe, Jenny Wilson, and Rita Jim\'enez Rolland for helpful conversations on the subject. The author is deeply grateful to his advisor Benson Farb, both for his continued support on the project and for his extensive comments on an earlier draft. 


\section{Cohomology of configuration spaces via point counting}
\label{arithmetic}
In this section, we will first prove Theorem \ref{main} and Corollary \ref{n limit} about weighted point-counts on $\cvq$. The main ideas of the proofs were already contained in Fulman's paper \cite{Fulman}, though he only proved the formulas in the special case when $V=\A^1$ and when $\ld=(0)$, $(1)$ and $(0,1)$. We systematically extend Fulman's result to all $V$ and all $\ld$, using some technical input from the Weil conjectures. We then apply the general formula in the case when $V=\A^1$ to prove part (I) of Theorem \ref{intro main} and of Corollary \ref{intro cor} about $\cc$.

\subsection{General set-up}
\label{set up}
Throughout this section, we will fix $V$ to be a smooth and connected variety over $\ZZ$ of dimension $d\ge1$. Define the \emph{configuration space} of $V$ to be the (scheme-theoretic) quotient
$$\cv:=\{(x_1\cdots,x_n)\in V^n : x_i\ne x_j,\  \forall i\ne j\}/S_n.$$
where $S_n$ acts on $V^n$ by permuting the coordinates. $\cv$ is  also a variety over $\ZZ$ (by \cite{Mumford}, page 66). So we can study its $\fq$-points $\cvq$. An element in $\cvq$ is a set of distinct points $C=\{x_1,\cdots,x_n\}\subseteq V(\fqbar)$ such that the Frobenius map $\F:V(\fqbar)\to V(\fqbar)$ preserves the set. The action of $\F$ on $C$ gives a permutation $\s_C\in S_n$, well-defined and unique up to conjugacy. Therefore, any class function $\chi$ of $S_n$ gives a well-defined function $\cvq\to\QQ$ by $C\mapsto \chi(\s_C)$.

\paragraph{Example: $V=\A^1$.} When $V$ is the affine line $\A^1$, we use $\cn$ to abbreviate for $\cn\A^1$. 
Elements $C\in\cnq$ are in bijection with monic, square-free, degree-$n$ polynomials in $\fq[x]$ via the map $$C=\{x_1,\cdots,x_n\}\mapsto f_C(x):=(x-x_1)\cdots(x-x_n).$$
Under this bijection, $X_k(\s_C)$, defined as the number of $k$-cycles in $\s_C$, equals to the number of degree-$k$ factors  in the irreducible factorization of $f_C(x)$ over $\fq$. 

\subsection{Proof of Theorem \ref{main}}

First we recall some basic facts about the zeta function of a variety $V$ over $\fq$:
$$\zt:=\prod_{x}({1-q^{\deg x}})^{-1}$$
where the product is taken over all closed points $x$ on $V$ over $\fq$. Weil conjectures give that $\zt$ is a rational function in $t$. Let $M_k(V,q)$ denote the number of closed points on $V$ of degree $k$, which is equivalently the number of orbits of $\F$ acting on $V(\fqbar)$ of size $k$. We have 
\begin{equation}
\label{euler}
\zt = \prod_{k=1}^\infty ({1-q^k})^{-M_k(V,q)}.
\end{equation}
Note that the fixed points of $\F$ on $V(\fqbar)$ are precisely $V(\fq)$. Similarly, for each $k$ we have 
$$|V(\ff_{q^k})|=\sum_{m|k}mM_m(V,q).$$
By M\"obius inversion, 
$$M_k(V,t)=\frac{1}{k}\sum_{m|k}\mu(\frac{k}{m})|V(\ff_{q^m})|.$$

\begin{proof}[Proof of Theorem \ref{main}]
Define a formal power series in  $x_1,\cdots,x_l$ and $t$:
\begin{equation}
\label{def F}
F(x_1,\cdots,x_l,t):=\sum_{n=0}^{\infty} \bigg[\sum_{C\in \cvq} x_1^{X_1(\s_C)}x_2^{X_2(\s_C)}\cdots x_l^{X_l(\s_C)} \bigg] t^n
\end{equation}
Recall that an element $C\in\cvq$ is just a subset of $V(\fqbar)$ of size $n$ that is preserved by $\F$. Thus, every $C\in\cvq$ can be decomposed uniquely into a disjoint union of distinct orbits of $\F$ acting on $V(\fqbar)$. The number of $\F$-orbits in $C$ of size $k$ is $X_k(\s_c)$.
 The unique decomposition of $C\in \cvq$ into disjoint union of distinct $\F$-orbits gives the following product formula\footnote{This is analogous to how unique factorization for integers gives the Euler product formula of  Riemann zeta function.}. 
\begin{align}
\nonumber F(x_1,\cdots,x_l,t)&=\Bigg[\prod_{k>l} (1+t^k)^{M_k(V,q)}\Bigg]\prod_{k\leq l} ({1+x_kt^k})^{M_k(V,q)}\\
\nonumber&=\Bigg[\prod_{k=1}^\infty (1+t^k)^{M_k(V,q)}\Bigg]\prod_{k\leq l}\Bigg(\frac{1+x_kt^k}{1+t^k}\Bigg)^{M_k(V,q)}\\
\nonumber&=\Bigg[\prod_{k=1}^\infty \bigg(\frac{1-t^{2k}}{1-t^{k}}\bigg)^{M_k(V,q)}\Bigg]\prod_{k\leq l}\Bigg(\frac{1+x_kt^k}{1+t^k}\Bigg)^{M_k(V,q)}
\end{align}
By the product formula (\ref{euler}), we obtain 
\begin{equation}
\label{Euler}
F(x_1,\cdots,x_l,t) = \frac{\zt}{Z(V,t^2)}\prod_{k\leq l} \Bigg(\frac{1+x_kt^k}{1+t^k}\Bigg)^{M_k(V,q)}
\end{equation}

Next we apply the formal differential operator $$(\frac{\partial}{\partial x})^{\ld}:=(\frac{\partial}{\partial x_1})^{\ld_1}(\frac{\partial}{\partial x_2})^{\ld_2}\cdots(\frac{\partial}{\partial x_l})^{\ld_l}$$
 to the series $F(x_1,\cdots,x_l,t)$ and then evaluate at $(x_1,\cdots,x_l)=(1,\cdots,1)$,  obtaining the following equalities. The symbol $\ld!$ is an abbreviation for $(\ld_1!)(\ld_2!)\cdots(\ld_l!)$. Differentiating (\ref{def F}) gives
$$(\frac{\partial}{\partial x})^{\ld}F(1,\cdots,1,t)= \ld!\cdot\sum_{n=0}^{\infty} \Bigg(\sum_{C\in \cvq} \dbinom{X}{\ld}(\s_C) \Bigg) t^n$$
Differentiating (\ref{Euler}) gives
$$(\frac{\partial}{\partial x})^{\ld}F(1,\cdots,1,t)
=\ld!\cdot\frac{\zt}{Z(V,t^2)}\cdot\prod_{k=1}^l \dbinom{M_k(V,q)}{\ld_k}\Bigg(\frac{t^k}{1+t^k}\Bigg)^{\ld_k}$$
Theorem \ref{main} follows by equating these two expressions for $(\frac{\partial}{\partial x})^{\ld}F(1,\cdots,1,t)$. 
\end{proof}

\subsection{Proof of Corollary \ref{n limit}}
First we recall the following basic fact from calculus.
\begin{lemma}\label{Taylor}
Given $A(t)=\sum_{n=0}^\infty a_nt^n$ where $a_n$ are real numbers. Suppose $A(t)=H(t)/(1-ct)$ where $c$ is a constant, and the radius of convergence of $H(t)$ is strictly greater than $|c^{-1}|$. Then $\lim_{n\to\infty} \frac{a_n}{c^n}$ exists and is equal to $H(c^{-1})$.
\end{lemma}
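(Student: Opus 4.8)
The plan is to prove Lemma \ref{Taylor} by a direct manipulation of power series combined with the standard radius-of-convergence estimate for coefficients. First I would write $H(t)=\sum_{n=0}^\infty h_nt^n$ and expand $1/(1-ct)=\sum_{m=0}^\infty c^mt^m$, which is legitimate as a formal identity and, more importantly, as an identity of convergent series inside a disk of radius $|c^{-1}|$. Multiplying the two series and comparing coefficients of $t^n$ in $A(t)=H(t)/(1-ct)$ gives the Cauchy product formula $a_n=\sum_{j=0}^n h_j\,c^{\,n-j}$, hence
\begin{equation*}
\frac{a_n}{c^n}=\sum_{j=0}^n \frac{h_j}{c^{\,j}}.
\end{equation*}
So the claim reduces to showing that the partial sums on the right converge to $H(c^{-1})=\sum_{j=0}^\infty h_j c^{-j}$ as $n\to\infty$.

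The key input is the hypothesis that the radius of convergence $R$ of $H$ satisfies $R>|c^{-1}|$. By the Cauchy–Hadamard theorem this means $\limsup_{j\to\infty}|h_j|^{1/j}=1/R<|c|$, so the series $\sum_j h_j c^{-j}$ has terms bounded by a geometric series with ratio $(|c|R)^{-1}<1$ and therefore converges absolutely; in particular its value at the point $c^{-1}$, which lies strictly inside the disk of convergence, is exactly $H(c^{-1})$. Since $a_n/c^n=\sum_{j=0}^n h_jc^{-j}$ is precisely the $n$-th partial sum of this convergent series, we conclude $\lim_{n\to\infty} a_n/c^n = H(c^{-1})$, which is the assertion.

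One point worth stating carefully is the passage from the formal identity $A(t)=H(t)/(1-ct)$ to the coefficient identity $a_n=\sum_{j=0}^n h_jc^{n-j}$: this holds at the level of formal power series regardless of convergence, since it is just the definition of the product of two formal series, so no analytic justification is actually needed there. The only place analysis enters is the convergence of $\sum_j h_jc^{-j}$, handled by Cauchy–Hadamard as above. I do not expect any genuine obstacle here — the lemma is elementary calculus/complex-analysis — but the step requiring the most care is making sure the hypothesis ``radius of convergence of $H$ strictly greater than $|c^{-1}|$'' is used in exactly the right way, namely to guarantee both absolute convergence of the tail and that the limiting value is the honest evaluation $H(c^{-1})$ rather than merely some abstract limit.
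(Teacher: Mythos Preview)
Your argument is correct: writing $a_n=\sum_{j=0}^n h_jc^{n-j}$ via the Cauchy product and recognizing $a_n/c^n$ as the $n$-th partial sum of $\sum_j h_jc^{-j}$ is exactly the right reduction, and the radius-of-convergence hypothesis guarantees this series converges absolutely to $H(c^{-1})$. The paper itself does not supply a proof of this lemma at all---it is simply announced as ``a basic fact from calculus''---so there is nothing to compare against; your write-up fills in precisely the standard argument one would expect.
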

Let
$$A(t):=\frac{\zt}{Z(V,t^2)}\cdot\prod_{k=1}^l \dbinom{M_k(V,q)}{\ld_k}\Bigg(\frac{t^k}{1+t^k}\Bigg)^{\ld_k} 
$$
The Riemann Hypothesis over finite fields (proved by Deligne \cite{D}) says that $\zt$ has a simple pole at $t=q^{-d}$ where $d=\dim V$. Moreover, each other zero or pole of $\zt$ has absolute value $q^{-j}$ for some $j\le 2d-1$. Thus,  
$$\zto:=\zt(1-q^dt)$$ has no pole at $|t|<q^{-d+\frac{1}{2}}$; while $1/Z(V;t^2)$ has no pole at $|t|<q^{-2d}<q^{-d}$ (recall that $d=\dim V>0$). Hence $A(t)$ and $c=q^{-d}$ satisfy 
the hypothesis of Lemma \ref{Taylor}, by which we conclude
$$\lim_{n\to\infty}\frac{1}{q^{nd}}\sum_{C\in \cvq} \dbinom{X}{\ld}(\s_C) = \bigg[A(t)(1-q^dt) \bigg]_{t=q^{-d}}$$
This establishes (\ref{cor1}).

Every character polynomial $P$ is a $\QQ$-linear combination of $X\choose\ld$ for different $\ld$. Thus, the limit (\ref{conv}) converges for all $P$. Part (a) is proved.

In the case when $\ld=(0)$, part (a) gives
\begin{equation}
\label{con}
\lim_{n\to\infty}\frac{|\cvq|}{q^{nd}}=\frac{\mathring{Z}(V,q^{-d})}{Z(V,q^{-2d})}.\end{equation}
Part (b) follows by taking the ratio of (\ref{cor1}) and (\ref{con}). 

\qed

\subsection{Connecting arithmetic and topology of $\cn$}
For the rest of this paper, we will focus on the case when $V=\A^1$. Recall that we use $\cn$ to abbreviate for $\cn\A^1$. Let $W$ be a representation of $S_n$, with character $\chi_W$. Church-Ellenberg-Farb  proved the following equation connecting arithmetic of $\cq$ and topology of $\cc$: (Proposition 4.1 in \cite{CEF})
\begin{equation}
\label{GL intro}
\sum_{C\in \cnq}\chi_{W}(\s_C) = q^n\sum_i(-1)^i \dim H^i(\cc;{W})\ q^{-i}.
\end{equation}
By additivity, same formula holds if we replace $W$ by a virtual representation. See Section 4 in \cite{CEF} for how  (\ref{GL intro}) is obtained from the Grothendieck-Lefschetz fixed point theorem in \'etale cohomology. Results from the previous section (in the case when $V=\A^1$) give us access to the left-hand-side of (\ref{GL intro}), from which we can prove results about $H^i(\cc;{W})$. 

\subsection{Proof of Theorem \ref{intro main}, (I)}

We abbreviate the twisted Betti number as 
\begin{equation}
\label{b_i(n)}\alpha_i(n):=\dim H^i(\cc;{X\choose \ld})
\end{equation}
for each $i$ and $n$. Define the double generating function for $\alpha_i(n)$ as the formal power series in two variables $z$ and $t$
\begin{equation}
\label{Phi}
\Phi_\ld(z,t):=\sum_{n=0}^\infty\sum_{i=0}^\infty \alpha_i(n)(-z)^it^n
\end{equation}
We want to compute $\Phi_\ld(z,t)$ as a rational function. We will need the following lemma.

\begin{lemma}\label{cal 2}
Suppose $\Phi(z,t)$ and $\Psi(z,t)$ are two power series in two formal variables $z$ and $t$. If for every prime power $q$, we have $\Phi(q^{-1},t)=\Psi(q^{-1},t)$ as formal power series in $t$, then $\Phi(z,t)=\Psi(z,t)$ as formal series in $z$ and $t$. 
\end{lemma}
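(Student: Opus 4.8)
The plan is to prove that the evaluation map from power series in $z$ and $t$ to power series in $t$, obtained by substituting $z = q^{-1}$ for infinitely many prime powers $q$, is injective. The key observation is that it suffices to work coefficient-by-coefficient in $t$: if we expand $\Phi(z,t) = \sum_{n=0}^\infty \phi_n(z) t^n$ and $\Psi(z,t) = \sum_{n=0}^\infty \psi_n(z) t^n$, then the hypothesis $\Phi(q^{-1},t) = \Psi(q^{-1},t)$ as formal power series in $t$ is exactly the statement that $\phi_n(q^{-1}) = \psi_n(q^{-1})$ for every $n$ and every prime power $q$. So the whole lemma reduces to the following one-variable claim: if $f(z)$ and $g(z)$ are formal power series in $z$ with $f(q^{-1}) = g(q^{-1})$ for all prime powers $q$, then $f = g$.

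To prove that one-variable claim, set $h(z) := f(z) - g(z) = \sum_{j=0}^\infty h_j z^j$, a formal power series vanishing at $z = q^{-1}$ for every prime power $q$. First I would note that since there are infinitely many prime powers, the values $q^{-1}$ accumulate at $0$, so in particular $h$ converges on a neighborhood of $0$ (one has to be slightly careful: a priori $h$ is only a formal power series, but the hypothesis forces it to have infinitely many distinct zeros in any disc around $0$, and this is incompatible with being a nonzero formal power series). The cleanest way to phrase it: suppose $h \ne 0$ and let $j_0$ be the smallest index with $h_{j_0} \ne 0$, so $h(z) = z^{j_0}(h_{j_0} + h_{j_0+1} z + \cdots)$. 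Evaluating at $z = q^{-1}$ and multiplying by $q^{j_0}$ gives $h_{j_0} + h_{j_0+1} q^{-1} + \cdots = 0$ for all prime powers $q$; letting $q \to \infty$ forces $h_{j_0} = 0$, a contradiction. Hence $h = 0$, i.e. $f = g$, and therefore $\phi_n = \psi_n$ for all $n$, giving $\Phi = \Psi$.

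The main technical subtlety — really the only place where care is needed — is justifying the ``letting $q \to \infty$'' step rigorously in the formal-power-series setting, since a generic formal power series need not converge anywhere. The resolution is that the hypothesis is much stronger than it looks: it is not merely that $h$ has a zero, but that $h$ has the specific value-set condition $h(q^{-1}) = 0$ for an infinite sequence $q^{-1} \to 0$. One should either (i) observe that these hypotheses already imply $h$ is identically zero by an induction on coefficients — strip off the lowest-degree term, evaluate, bound the tail — or (ii) if one wishes to invoke convergence, first show by a comparison/growth argument that the partial sums are controlled. I would present approach (i), the coefficient-stripping induction, as it stays entirely within formal power series: assuming $h_0 = h_1 = \cdots = h_{j-1} = 0$ has been established, write $h(z) = z^j \tilde{h}(z)$, evaluate the identity $\tilde h(q^{-1}) = 0$, and extract $h_j = \tilde h(0) = \lim_{q\to\infty}\tilde h(q^{-1}) = 0$; here the limit is legitimate because $\tilde h(q^{-1}) = 0$ identically, so no convergence issue actually arises. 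This closes the induction and the lemma follows.
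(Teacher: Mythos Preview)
Your reduction to the one-variable claim is identical to the paper's: write $\Phi = \sum_n \phi_n(z) t^n$, $\Psi = \sum_n \psi_n(z) t^n$, and observe that the hypothesis gives $\phi_n(q^{-1}) = \psi_n(q^{-1})$ for every $n$ and every prime power $q$. Where you diverge is in the endgame. The paper invokes Abel's theorem (convergence of a power series at some $z_0$ implies absolute convergence on $|z| < |z_0|$) to conclude that $\phi_n$ and $\psi_n$ are holomorphic on a disk of positive radius, and then applies the identity theorem for holomorphic functions, since the zeros $q^{-1}$ accumulate at $0$. You instead attempt a coefficient-stripping argument that you claim avoids convergence altogether.

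There is a genuine gap in your justification of that step. You write $h_j = \tilde h(0) = \lim_{q\to\infty}\tilde h(q^{-1}) = 0$ and say the limit is ``legitimate because $\tilde h(q^{-1}) = 0$ identically, so no convergence issue actually arises.'' But the equality $\tilde h(0) = \lim_{q\to\infty}\tilde h(q^{-1})$ is precisely continuity of $\tilde h$ at $0$, and that requires the series to converge in a neighborhood of $0$. The fact that the sequence $\tilde h(q^{-1})$ happens to be identically zero tells you the limit \emph{exists}; it does not by itself tell you that this limit equals the constant coefficient $\tilde h(0) = h_j$. So your claim that approach (i) ``stays entirely within formal power series'' is incorrect: without convergence there is no link between $\tilde h(0)$ (a formal datum: the constant term) and the values $\tilde h(q^{-1})$ (an analytic datum).

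The fix is exactly the observation the paper makes explicit: the hypothesis that $\Phi(q^{-1},t)$ is a well-defined power series in $t$ already forces each $\phi_n$ (and likewise $\psi_n$, hence $h$ and $\tilde h$) to converge at every $q^{-1}$, and Abel's theorem then gives convergence on a disk and hence continuity at $0$. With that in hand your coefficient-stripping argument goes through, and is then a slightly more elementary alternative to the paper's appeal to the identity theorem --- but it does not bypass the analytic input.
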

\begin{proof}[Proof of Lemma]
Suppose $\Phi_\ld(t,z)=\sum_{n=0}^\infty \phi_n(z)t^n$ and $\Psi(t,z)=\sum_{n=0}^\infty \psi_n(z)t^n$, where $\phi_n(z)$ and $\psi_n(z)$ are formal series in $z$ for each $n$. By hypothesis, for 
every prime power $q$, we have $\phi_n(q^{-1})=\psi_n(q^{-1})$. Recall the following fact from calculus: 
\begin{itemize}
\item If an infinite series $h(z) = \sum_{i=0}a_i z^i$ converges at $z=z_0$, then it converges absolutely at all $z$ with $|z|<|z_0|$. 
\end{itemize}
Hence, both $\phi_n(z)$ and $\psi_n(z)$ are holomorphic functions on a disk with a positive radius centered at 0. Since $\phi_n(z)=\psi_n(z)$ for all $z\in\{q^{-1}\ |\ \text{$q$ is a prime power}\}$ which accumulates at $0$, it must be $\phi_n(z)=\psi_n(z)$ as holomorphic functions. By the uniqueness of power series expansion,  $\phi_n(z)=\psi_n(z)$ as formal series in $z$. Thus $\Phi(z,t)=\Psi(z,t)$ as formal series in $z$ and $t$.
\end{proof}

Now we evaluate the double generating function $\Phi_\ld(z,t)$ at $z=q^{-1}$.
\begin{align*}
\nonumber\Phi_\ld(q^{-1},t)&=\sum_{n=0}^\infty\sum_{i=0}^\infty(-1)^ib_i(n)q^{-i}t^n\\
\nonumber&=\sum_{n=0}^\infty \bigg[\sum_{C\in \cnq}{X\choose\ld}(\s_C) \bigg] (q^{-1}t)^n&\text{By (\ref{GL intro})}\\
&=\frac{Z(\A^1,tq^{-1})}{Z(\A^1,(tq^{-1})^2)}\prod_{k=1}^n \dbinom{M_k(\A^1,q)}{\ld_k}\Bigg(\frac{(tq^{-1})^k}{1+(tq^{-1})^k}\Bigg)^{\ld_k}&\text{By Theorem \ref{main}}
\end{align*}
The \emph{$k$-th necklace polynomial in $x$} is $$M_k(x)=\frac{1}{k}\sum_{m|k}\mu(\frac{k}{m})x^{m}.$$ 
A standard calculation gives that $Z(\A^1,t)=\frac{1}{1-qt}$, and that $M_k(\A^1,q)=M_k(q)$. Thus, we simplify the above:
\begin{equation}
\label{q series}
\Phi_\ld(q^{-1},t)=\frac{1-t^2q^{-1}}{1-t}\prod_{k=1}^n \dbinom{M_k(q)}{\ld_k}\Bigg(\frac{(tq^{-1})^k}{1+(tq^{-1})^k}\Bigg)^{\ld_k}
\end{equation}
Since (\ref{q series}) holds at $z=q^{-1}$ for any prime power $q$. By Lemma \ref{cal 2}, the same equation holds when $q^{-1}$ is replaced by a formal variable $z$. 

\qed

\subsection{Stability of Betti numbers}

It was known by the general theory of representation stability developed by Church-Ellenberg-Farb that for any character polynomial $P$, the twisted Betti numbers $\dim  H^i(\cc;P)$ will be independent of $n$ when $n$ is sufficiently large. We will give a different proof of this result with an improved stability range for $n$.

\begin{cor}\label{st}
For every character polynomial $P$ and for every $i$, we have 
\begin{equation}
\label{stable}
\dim  H^i(\cc;P)=\dim  H^i(\mathrm{Conf}_{n+1}(\CC);P)
\end{equation}
when $n\geq i+\deg P+1$. 
\end{cor}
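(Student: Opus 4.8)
The plan is to reduce the stability statement to a property of the double generating function $\Phi_\ld(z,t)$ computed in Theorem \ref{intro main}(I), since every character polynomial $P$ is a $\QQ$-linear combination of the $X\choose\ld$ and cohomology is additive in coefficients; hence it suffices to prove the bound for $P={X\choose\ld}$ with $|\ld|=N$, i.e. to show $\alpha_i(n)=\alpha_i(n+1)$ once $n\ge i+N+1$. First I would extract from the product formula the generating function in $t$ of the ``Euler characteristic polynomials'' $p_n(z):=\sum_i \alpha_i(n)(-z)^i$, namely $\Phi_\ld(z,t)=\sum_n p_n(z)t^n$. The key structural observation is that $\Phi_\ld(z,t)=\frac{1-zt^2}{1-t}\cdot R(z,t)$ where $R(z,t)=\prod_{k=1}^l\binom{M_k(z^{-1})}{\ld_k}\big(\frac{(tz)^k}{1+(tz)^k}\big)^{\ld_k}$; the crucial point is that, after clearing the apparent negative powers of $z$, the factor $R(z,t)$ is a power series in $t$ whose coefficient of $t^n$ is a \emph{polynomial in $z$ of degree at most $n$} — more precisely the $t^n$-coefficient of $R$ is a polynomial of degree at most $n$ but with a matching lower bound on the order of vanishing, coming from the $(tz)^{k\ld_k}$ in the numerator.

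Second, I would pin down exactly how the prefactor $\frac{1-zt^2}{1-t}$ interacts with stability. Writing $\frac{1-zt^2}{1-t}=(1-zt^2)(1+t+t^2+\cdots)$, multiplication by $\frac{1}{1-t}$ turns a sequence into its sequence of partial sums, and multiplication by $(1-zt^2)$ subtracts a $z$-weighted shift by $2$. The effect on coefficients is that $p_{n+1}(z)-p_n(z)$ equals (coefficient of $t^{n+1}$ in $R$) $-\,z\cdot$(coefficient of $t^{n-1}$ in $R$). So the whole corollary comes down to the claim: for $n\ge i+N+1$, the coefficient of $z^i$ in $[t^{n+1}]R(z,t)-z\,[t^{n-1}]R(z,t)$ vanishes. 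I would prove this by a degree/vanishing-order count on $R$: each factor $\binom{M_k(z^{-1})}{\ld_k}\big(\frac{(tz)^k}{1+(tz)^k}\big)^{\ld_k}$ contributes, in its $t^m$-coefficient, a Laurent polynomial in $z$, and tracking simultaneously the top $z$-degree and the bottom $z$-degree as functions of $m$ shows that $[t^m]R$ is a polynomial in $z$ supported in degrees between roughly $m - (\text{something})$ and $m$; the gap between the two $t$-powers $n+1$ and $n-1$ is only $2$, whereas the window where the $z^i$-coefficient could be nonzero forces $i\ge n-N$, i.e. $n\le i+N$, contradicting $n\ge i+N+1$.

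Concretely, the bookkeeping I anticipate is this: from $\big(\frac{(tz)^k}{1+(tz)^k}\big)^{\ld_k}=\sum_{j\ge \ld_k}\pm\binom{j-1}{\ld_k-1}(tz)^{kj}$ one sees the $t$-exponent and the $z$-exponent move together, while $\binom{M_k(z^{-1})}{\ld_k}$ only \emph{lowers} $z$-degree (by at most $\ld_k$ in the relevant sense after clearing denominators, using $\deg_{z^{-1}}M_k(z^{-1})=1$, so across all $k$ by at most $\sum k\ld_k=N$). Hence in $[t^n]R$ the coefficient of $z^i$ is zero unless $n-N\le i\le n$; since $\frac{1-zt^2}{1-t}$ only mixes $t$-degrees differing by $\le 2$ with a possible extra $z$, in $[t^n]\Phi_\ld$ the coefficient of $z^i$ — which is $(-1)^i\alpha_i(n)$ up to sign — is zero unless $i\le n$ and the ``new'' part (the piece not already present at level $n-1$) is zero once $n-1\ge i+N$... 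I would chase the exact inequality and land on $n\ge i+N+1\Rightarrow \alpha_i(n)=\alpha_i(n+1)$, then remark (for Remark \ref{sharp}) that the extremal monomials in $R$ show this range is optimal. The main obstacle I expect is purely bookkeeping: getting the $z$-degree window for $[t^n]R$ exactly right — in particular correctly accounting for the necklace-polynomial factors $\binom{M_k(z^{-1})}{\ld_k}$, which carry negative powers of $z$ that must be cleared against the numerator $(tz)^{k\ld_k}$ before any degree count is meaningful — and then feeding that window through the $\frac{1-zt^2}{1-t}$ prefactor without being off by one. No single step is deep; the risk is an arithmetic slip in the two-variable degree estimate, so I would double-check it against the worked example $\ld=(0,1)$ (where $\alpha_i(n)=\dim H^i(\cc;{X\choose(0,1)})$) before concluding.
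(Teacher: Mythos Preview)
Your approach is essentially the same as the paper's: both reduce to $P={X\choose\ld}$, pass to $(1-t)\Phi_\ld(z,t)$, and then do a degree count on the product formula from Theorem \ref{intro main}(I) to show that the coefficient of $z^it^{n+1}$ in $(1-t)\Phi_\ld$ vanishes once $n+1-i>|\ld|+1$. The paper streamlines the bookkeeping by introducing the notion that a series ``has slope $\le m$'' if every monomial $z^it^n$ in it satisfies $n-i\le m$; this is submultiplicative (slope of a product is $\le$ sum of slopes), so one simply checks the three types of factors $(1-zt^2)$, $\binom{M_k(z^{-1})}{\ld_k}$, and $\big(\frac{(tz)^k}{1+(tz)^k}\big)^{\ld_k}$ separately and adds. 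Your two-sided $z$-degree window on $[t^m]R$ reaches the same inequality but does more than is needed --- only the lower edge of the window (equivalently, the upper bound on $n-i$) matters for stability.

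One small slip in your bookkeeping: you wrote $\deg_{z^{-1}}M_k(z^{-1})=1$, but it is $k$ (the leading term of $M_k(z^{-1})$ is $z^{-k}/k$). Hence $\binom{M_k(z^{-1})}{\ld_k}$ lowers the $z$-degree by up to $k\ld_k$, not $\ld_k$. Your stated total $\sum_k k\ld_k=N$ is nonetheless the correct one, so this would sort itself out as soon as you actually wrote the argument down --- exactly the kind of off-by-a-constant risk you flagged.
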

\begin{rmk}\label{sharp}
Church-Ellenberg-Farb first proved (\ref{stable}) when $n\geq 2i+\deg P$ (Theorem 1 \cite{CEF}). Later, Hersh-Reiner gave a different proof of (\ref{stable}) with a better stable range: $n\ge\max\{2\deg P,\ \deg P+i+1\}$ (Theorem 4.3 in \cite{HR}). The stable range in Corollary \ref{st} is a small improvement of the range obtained by Hersh-Reiner, and is sharp, as we will show it in Section \ref{examples}.
\end{rmk}

\begin{proof}
It suffices to consider when $P={X\choose\ld}$ for some sequence $\ld=(\ld_1,\cdots,\ld_k)$. In this case $\deg {X\choose\ld}=\sum_{k}k\ld_k$. Let $\alpha_i(n)$ be as in (\ref{b_i(n)}), and let $\Phi_\ld(z,t)$ be as in (\ref{Phi}).
$$(1-t)\Phi_\ld(t,z)=
1+t\sum_{n=0}^\infty\sum_{i=0}^\infty [\alpha_i(n+1)-\alpha_i(n)]z^it^n$$
It suffices to check that $(1-t)\Phi_\ld(t,z)$ is a sum of monomials of the form $z^{i}t^{n}$ where $n-i\leq \sum_{k=0}^{l}k\ld_k+1$. 

We will say an infinite series in $z$ and $t$ \emph{has slope $\leq m$} if it is a sum of monomials $z^{i}t^{n}$ where $n-i\leq m$. We want to show that the series  given by Theorem \ref{intro main}
\begin{equation}
\label{expression}
(1-t)\Phi_\ld(t,z)= (1-zt^2)\prod_{k=1}^l \dbinom{M_k(z^{-1})}{\ld_k}\Bigg(\frac{(tz)^k}{1+(z)^k}\Bigg)^{\ld_k}
\end{equation}
 has slope $\leq \sum_{k=0}^{l}k\ld_k+1$. We analyze each factor.
\begin{itemize}
\item $(1-zt^2)$ has slope $\leq 2-1=1$. 
\item For each $k$, the factor $M_k(z^{-1})$ has slope $\leq k$. Thus, $\dbinom{M_k(z^{-1})}{\ld_k}$ has slope $\leq k\ld_k$. 
\item For each $k$, $\bigg[\frac{(tz)^k}{1+(tz)^k}\bigg]^{\ld_k}$ has slope $\leq0$. 
\end{itemize}
Therefore, the product in (\ref{expression}) has slope $\leq 1+\sum_{k=0}^{l}k\ld_k$. This establishes the corollary.
\end{proof}
\subsection{Proof of Corollary \ref{intro cor}, (I).}
Let $\alpha_i$ be $\alpha_i(n)$ when $n\ge i+|\ld|+1$ in the stable range. Define the generating function
$$\Phi_\ld^\infty(z):=\sum_{i=0}^\infty \alpha_i(-z)^i$$
By Lemma \ref{Taylor}, we can calculate $\Phi_\ld^\infty(z)$ using $\Phi_\ld(z,t)$:
\begin{align}
\nonumber\Phi_\ld^\infty(z) &= \bigg[(1-t)\Phi_\ld(z,t)\bigg]_{t=1}&\text{by Theorem \ref{intro main}}\\
\label{gen fun stable} &= {(1-z)}\prod_{k=1}^l \dbinom{M_k(z^{-1})}{\ld_k}\Bigg(\frac{z^k}{1+z^k}\Bigg)^{\ld_k}
\end{align}
In particular,  $\Phi_\ld^\infty(z)$ in (\ref{gen fun stable}) is a rational function in $z$. The denominator is a polynomial in $z$ of degree $\sum_{k=1}^{l}k\ld_k=|\ld|$. 
The numerator has degree at most $1+|\ld|$.
This implies that $\alpha_i$ satisfies a linear recurrence relation of length $|\ld|$ once $i>|\ld|+1$. 

\qed
%
%

\subsection{Examples}
\label{examples}

Recall that  irreducible representations of $S_n$ are in bijection with partitions of $n$. For a fixed partition $\mu\vdash n$ where $\mu=(\mu_1\ge\mu_2\ge\cdots\ge\mu_r)$, we will denote by $V(\mu)_n$ \footnote{Sometimes we will suppress $n$ from the notation.} the representation of $S_n$ corresponding to the partition $n=(n-\sum_{i=1}^r \mu_i)+\mu_1+\cdots+\mu_r$ for all $n$ sufficiently large, \emph{i.e.} for $n-\sum_{i=1}^r \mu_i\geq \mu_1$. Going from $V(\mu)_n$ to $V(\mu)_{n+1}$ corresponds to adding one block in the first row of the corresponding Young diagram. 
Church-Ellenberg-Farb proved that $H^i(\pc;\QQ)$ is \emph{multiplicity stable} (Theorem 1.9 in \cite{CEF2}): for each  $i$, there is a finite set $Q_i$ of partitions such that
\begin{equation*}
H^i(\pc;\QQ)\cong\bigoplus_{\mu\in Q_i} V(\mu)_n^{\oplus d_i(\mu)}
\end{equation*}
for all $n$ sufficiently large. In particular, the sum over $Q_i$ is independent of $n$. Farb proposed the problem of computing $d_i(\mu)$ for each $i$ and each $\mu$ (Problem 3.5 in \cite{ICM}). Macdonald proved that for all partition $\mu$, the character of $V(\mu)_n$ is given by a unique character polynomial $P_\mu$ for all $n$ sufficiently large (Example I.7.14 in \cite{Mac}). Therefore, by transfer (\ref{transfer}), computing $d_i(\mu)$ is equivalent to computing the stable cohomology of $H^i(\cc;P_\mu)$. We will demonstrate the case of computing these using Theorem \ref{intro main} (I) in three examples where $\mu$ is the partition $1=1$, or $2=1+1$, or $2=2$.\\


\noindent {{\textbf{Example 1: $\pmb {W_n=V(1)_n}$}.}
Assume $n\geq2$, the irreducible representation $V(1)_n$ corresponds to the Young diagram $(n-1,1)$. 
It is also known as the \emph{standard representation}:
 $$V(1)_n \cong\{(x_1,\cdots,x_n)|\sum x_i=0\}\cong\QQ^{n-1}$$
where $S_n$ acts by permuting the coordinates. 

The $S_n$-character of $W$ is given by the character polynomial $X_1-1$. If we abbreviate the Betti number as
$$\alpha_i(n)=\dim H^i(\cc;V(1)_n),$$
then Theorem \ref{intro main} gives that the double generating function of $\alpha_i(n)$ is
\begin{align*}
\sum_{n=0}^\infty\sum_{i=0}^\infty& (-1)^i\alpha_i(n)\ z^it^n  = \frac{1-t^2z}{1-t}\bigg[\frac{t}{1+tz}-1\bigg]\\
&=(-z+z^2)t^3+(-z+2z^2-z^3)t^4 +(-z+2z^2-2z^3+z^4)t^5\\
&\ \ \ \ +(-z+2z^2-2z^3+2z^4+z^5)t^6+\cdots
\end{align*}
Thus, we conclude that when $n\geq 3$,
\begin{equation*}
\alpha_i(n)=
  \begin{cases}
   0 \ \ \ &  i=0 \\
   1 \ \ \ &  i=1 \\
   2 \ \ \ &  0<i<n-1\\
   1 \ \ \ &  i=n-1\\
  \end{cases}
\end{equation*}
\begin{rmk}
A computation of $\dim H^i(\cc;V(1)_n)$ from Lehrer-Solomon's description of $H^i(\pc;\QQ)$ was presented in Proposition 4.5 of \cite{CEF}. It took about one and half pages.  The computation above using generating function is a faster procedure.
\end{rmk}
The stable Betti numbers are:
\begin{equation*}
\alpha_i:=\lim_{n\to\infty}\dim H^i(\cc;V(1))=
  \begin{cases}
   0 \ \ \ &  i=0 \\
   1 \ \ \ &  i=1 \\
   2 \ \ \ &  i>1\\
  \end{cases}
\end{equation*}
When $i\geq 2$, the stable Betti numbers $\alpha_i$ are the same, which in particular satisfy a recurrence relation of length 1. From this example we see that the bounds in Corollary \ref{intro cor} (I) and Corollary \ref{st} are sharp.\\

\noindent {{\textbf{Example 2: $\pmb{W_n=V(1,1)_n.}$}} Assume $n\geq3$, the irreducible representation $V(1,1)_n$ corresponds to the Young diagram $(n-2,1,1)$. The dimension of $V(1,1)$ is $(n^2-3n+2)/{2}$. In fact, we have $V(1,1)\cong{\bigwedge}^2\QQ^{n-1}$ where $\QQ^{n-1}$ is the standard representation $V(1)$.  The character of $V(1,1)$ is given by the following character polynomial:
$${X_1\choose2}-X_1-X_2+1$$
If we abbreviate the Betti numbers $\alpha_i(n)=\dim H^i(\cc;V(1,1)_n),$ then
Theorem \ref{intro main} gives that
\begin{align*}
\sum_{n=0}^\infty\sum_{i=0}^\infty& (-1)^i\alpha_i(n)\ z^it^n\ =\ \Phi_{(2)}(z,t)-\Phi_{(1)}(z,t)-\Phi_{(0,1)}(z,t)+\Phi_{(0)}(z,t)\\
&=\frac{1-t^2z}{1-t}\bigg[\frac{(1-z)t^2}{2(1+tz)^2}-\frac{t}{1+tz}-\frac{(1-z)t^2}{2(1+(tz)^2)}+1\bigg]
\end{align*}

By expanding the generating function, we have the following table of the Betti numbers:\begin{center}
\begin{tabular}{|r||c|c|c|c|c|c|c|c|c|c|c|c|}
\hline
\multicolumn{13}{|c|}{$\alpha_i(n)=\dim H^i(\cc;V(1,1)_n)$}                                                         \\ \hline\hline
  $(i,n)$    & $n=3$ & 4 & 5 & \multicolumn{1}{c|}{6} & 7 & 8 & 9  & 10 & 11 & 12 & \multicolumn{1}{c|}{13} & 14 \\ \hline\hline
$i=0$ & \textbf{0} & 0 & 0 & 0                      & 0 & 0 & 0  & 0  & 0  & 0  & 0                       & 0  \\ \hline
1     & 0 & \textbf{0} & 0 & 0                      & 0 & 0 & 0  & 0  & 0  & 0  & 0                       & 0  \\ \hline
2     & 0 & 1 & \textbf{2} & 2                      & 2 & 2 & 2  & 2  & 2  & 2  & 2                       & 2  \\ \hline
3     &   & 1 & 3 & \textbf{5}                      & 5 & 5 & 5  & 5  & 5  & 5  & 5                       & 5  \\ \hline
4     &   &   & 1 & 4                      & \textbf{6} & 6 & 6  & 6  & 6  & 6  & 6                       & 6  \\ \hline
5     &   &   &   & 1                      & 5 & \textbf{7} & 7  & 7  & 7  & 7  & 7                       & 7  \\ \hline
6     &   &   &   &                        & 2 & 7 & \textbf{10} & 10 & 10 & 10 & 10                      & 10 \\ \hline
7     &   &   &   &                        &   & 3 & 9  & \textbf{13} & 13 & 13 & 13                      & 13 \\ \hline
8     &   &   &   &                        &   &   & 3  & 10 & \textbf{14} & 14 & 14                      & 14 \\ \hline
9     &   &   &   &                        &   &   &    & 3  & 11 & \textbf{15} & 15                      & 15 \\ \hline
10    &   &   &   &                        &   &   &    &    & 4  & 13 & \textbf{18}                      & 18 \\ \hline
11    &   &   &   &                        &   &   &    &    &    & 5  & 15                      & \textbf{21} \\ \hline
12    &   &   &   &                        &   &   &    &    &    &    & 5                       & 16 \\ \hline
13    &   &   &   &                        &   &   &    &    &    &    &                         & 5  \\ \hline
\end{tabular}
\end{center}

The bold entries lie on the line $n=i+3$. In each row, the Betti number stabilizes as $n\geq i+3$. This agrees with the stability bound as predicted in Corollary \ref{stable}: $n>i+\deg({X_1\choose2}-X_1-X_2+1)=i+2$. Moreover, we can see from the table that the bound is sharp.

Furthermore, from (\ref{gen fun stable}), we have the following formula for the generating function of the stable Betti numbers $\alpha_i:=\lim_{n\to\infty}\dim H^i(\cc;V(1,1)_n)$:
\begin{align*}
\sum_{i=0}^\infty (-1)^i\alpha_iz^i&= \Phi_{(2)}^\infty(z)-\Phi_{(1)}^\infty(z)-\Phi_{(0,1)}^\infty(z)
+\Phi_{(0)}^\infty(z)=(1-z)\bigg[\frac{1-z}{2(1+z)^2}-\frac{1}{1+z}-\frac{1-z}{2(1+z^2)}+1\bigg]\\
&=2z^2-5z^3+6z^4-7z^5+10z^6-13z^7+14z^8-15z^9+18z^{10}-21z^{11}+\cdots
\end{align*}
The stable Betti numbers satisfy the linear recurrence relation:
$$\alpha_i=2\alpha_{i-1}-2\alpha_{i-2}+2\alpha_{i-3}-\alpha_{i-4}.$$
By explicitly solving the recurrence relation, we have $\alpha_0=0$, $\alpha_1=2$, and when $i\geq 3$, 
\begin{equation*}
\alpha_i=
  \begin{cases}
   2i-2 \ \ \ &  i=0 \mod 4\\
   2i-3 \ \ \ &  i=1\mod 4\\
   2i-2 \ \ \ &  i= 2\mod 4\\
   2i-1 \ \ \ &  i= 3\mod 4\\
  \end{cases}
\end{equation*}
\begin{rmk}
In Section 4.4 of \cite{CEF}, Church-Ellenberg-Farb used $L$-functions to compute the stable cohomology of $H^i(\cc;{\bigwedge}^2\QQ^{n})$. Since ${\bigwedge}^2\QQ^{n}\cong {\bigwedge}^2\QQ^{n-1}\oplus\QQ^{n}$, we recover their computation. Moreover, we also obtained unstable cohomology.
\end{rmk}

\noindent {{\pmb{Example 3: $W_n=V(2)_n.$}}
Assume $n\geq4$, the irreducible representation $V(2)_n$ corresponds to the Young diagram $(n-2,2)$. 
The dimension of $V(2)$ is $(n^2-3n)/2$. In fact, $V(2)$ is a direct summand in the symmetric square of the standard representation $\QQ^{n-1}$. More precisely, we have
$$\mathrm{Sym}^2(\QQ^{n-1}) \cong \QQ^{n}\oplus V(2)$$
The character of $V(2)$ is given by the following character polynomial
$${X_1\choose2}+X_2-X_1$$
If we abbreviate the Betti numbers
$\alpha_i(n):=\dim  H^i(\cc;V(2)_n)$,
Theorem \ref{intro main} gives 
\begin{align*}
\sum_{n=0}^\infty\sum_{i=0}^\infty (-1)^i\alpha_i(n)\ z^it^n  &=\Phi_{(2)}(z,t)+\Phi_{(0,1)}(z,t)-\Phi_{(1)}(z,t)\\
&= \frac{1-t^2z}{1-t}\bigg[\frac{(1-z)t^2}{2(1+tz)^2}+\frac{(1-z)t^2}{2(1+{(tz)}^2)}-\frac{t}{1+tz}\bigg]
\end{align*}

By expanding the generating function, we have the following table of  Betti numbers:%
\begin{center}
\centering
\begin{tabular}{|r||c|c|c|c|c|c|c|c|c|c|c|}
\hline
\multicolumn{12}{|c|}{$\alpha_i(n)=\dim H^i(\cc;V(2)_n)$}                                                                                      \\ \hline\hline
 $(i,n)$    & $n=4$ & 5 & \multicolumn{1}{c|}{6} & 7 & 8 & 9 & 10 & 11 & 12 & \multicolumn{1}{c|}{13} & 14 \\ \hline\hline
$i=0$          & 0   & 0   & 0                        & 0   & 0   & 0   & 0    & 0    & 0    & 0                         & 0    \\ \hline
1       & \textbf{1}   & 1   & 1                        & 1   & 1   & 1   & 1    & 1    & 1    & 1                         & 1    \\ \hline
2  &      1   & \textbf{2}   & 2                        & 2   & 2   & 2   & 2    & 2    & 2    & 2                         & 2    \\ \hline
3  &         0   & 2   &  \textbf{3}                       &   3 & {3}   & 3   & 3    & 3    & 3    & 3                         & 3    \\ \hline
4  &         & 1   & 4                        & \textbf{6}   & 6   & 6   & 6    & 6    & 6    & 6                         & 6    \\ \hline
5  &           &     & 2                        & {6}   & \textbf{9}   & 9   & 9    & 9    & 9    & 9                         & 9    \\ \hline
6         &     &     &                          & 2   & 7   & \textbf{10}  & 10   & 10   & 10   & 10                        & 10   \\ \hline
7        &     &     &                          &     & 2   & 8   & \textbf{11}   & 11   & 11   & 11                        & 11   \\ \hline
8         &     &     &                          &     &     & 3   & 10   & \textbf{14}   & 14   & 14                        & 14   \\ \hline
9      &     &     &                          &     &     &     & 4    & 12   & \textbf{17}   & 17                        & 17   \\ \hline
10        &     &     &                          &     &     &     &      & 4    & 13   & \textbf{18}                        & 18   \\ \hline
11      &     &     &                          &     &     &     &      &      & 4    & 14                        & \textbf{19}   \\ \hline
12 &   &     &                          &     &     &     &      &      &      & 5                         & 16   \\ \hline
13        &     &     &                          &     &     &     &      &      &      &                           & 6    \\ \hline
\end{tabular}
\end{center}

The bold entries lie on the line $n=i+3$. In each row, the Betti number stabilizes when $n\geq i+3$. This agrees with the stability bound as predicted in Corollary \ref{stable}: $n>i+\deg({X_1\choose2}+X_2-X_1)=i+2$. We can see from the table that the bound is sharp. 

Furthermore, from (\ref{gen fun stable}), we have the following formula for the generating function of the stable Betti numbers:
\begin{align*}
\sum_{i=0}^\infty (-1)^i\alpha_iz^i&= \Phi_{(2)}^\infty(z)-\Phi_{(1)}^\infty(z)-\Phi_{(0,1)}^\infty(z)
+\Phi_{(0)}^\infty(z)=(1-z)\bigg[\frac{1-z}{2(1+z)^2}+\frac{1-z}{2(1+z^2)}-\frac{1}{1+z}\bigg]\\
&=-z+2z^2-3z^3+6z^4-9z^5+10z^6-11z^7+14z^8-17z^9+18z^{10}-19z^{11}+\cdots
\end{align*}
The stable Betti numbers satisfies the linear recurrence relation:
$$\alpha_i=2\alpha_{i-1}-2\alpha_{i-2}+2\alpha_{i-3}-\alpha_{i-4}.$$
We can explicitly solve the recurrence relation and obtain that $\alpha_0=0$, and when $i\ge1$, 
\begin{equation*}
\alpha_i:=\lim_{n\to\infty}\dim H^i(\cc;V(2))=
  \begin{cases}
   2i-2 \ \ \ &  i=0 \mod 4\\
   2i-1 \ \ \ &  i=1\mod 4\\
   2i-2 \ \ \ &  i= 2\mod 4\\
   2i-3 \ \ \ &  i= 3\mod 4\\
  \end{cases}
\end{equation*}

\section{Cohomology of $\tc$ via point counting}
\label{at tori}

In this section we prove part (II) of Theorem \ref{intro main} and Corollary \ref{intro cor}. Our analysis of $\tn$ closely parallels that of $\cn$ before.

\subsection{General set-up}
%

$\tn=\Tn/S_n$ is a scheme over $\ZZ$ (again, see page 66 in \cite{Mumford}). The $\fq$-points $\tq$ consists of sets $L=\{L_1,\cdots,L_n\}$ of $n$ linearly independent lines in $\PP^{n-1}(\fqbar)$ 
such that the Frobenius map $\F:\PP^{n-1}(\fqbar)\to \PP^{n-1}(\fqbar)$ preserves the set $T$. 

Let $F$ abbreviate the Frobenius map. An $F$-stable \emph{torus} in $\glq$ is an algebraic subgroup which becomes diagonalizable over $\fqbar$. An  $F$-stable torus is \emph{maximal} if it is not properly contained in any larger one. Given any $F$-stable maximal torus $T$, its $n$ eigenvectors in $\fqbar^n$ defines a set $L_T$ of $n$ independent lines in $\fqbar^n$. Thus $L_T$ is a element of $\tq$. The map $T\mapsto L_T$ gives a bijection between $F$-stable maximal tori in $\glq$ and $\fq$-points of $\tn$. Therefore, $\tq$ is precisely the set of $F$-stable maximal tori in $\glq$. See Section 5.1 of \cite{CEF} for a proof.

For any $T\in\tq$, the action of $\F$ on $L_T$, a set of $n$ lines in $\fqbar^n$, gives a permutation $\s_T\in S_n$, unique up to conjugacy. Church-Ellenberg-Farb proved the following equation using the Grothendieck-Lefschetz fixed point formula.  Given any $S_n$-representation $W$ with character $\chi_W$,
\begin{equation}\label{GL tori}
\sum_{T\in\tq}\chi_W(\s_T) = {q^{n(n-1)}}\sum_{i=0}^{n(n-1)/2} \dim H^{2i}(\tc;W)q^{-i}
\end{equation}
This formula was stated in Theorem 5.3 in \cite{CEF}. By additivity, the same formula holds when $W$ is taken to be a virtual representation of $S_n$.

\subsection{Arithmetic statistics for $F$-stable maximal tori in $\glq$}

In this subsection, we will compute the left-hand-side of (\ref{GL tori}) when $W$ is given by a character polynomial of the form  $X\choose\ld$. Our approach will be a systematic extension of Fulman's method in \cite{Fulman}. All the ideas in this subsection were already in Fulman's paper.

\begin{prop}
\label{count max tori}
For each fixed sequence of nonnegative integers $\ld=(\ld_1,\cdots,\ld_l)$, let $z_\ld:=\prod_{k=1}^l \ld_k!k^{\ld_k}$. We have the following equation of formal power series in $t$.
\begin{equation}
\sum_{n=0}^{\infty} \Bigg[\sum_{T\in \tq} \dbinom{X}{\ld}(\s_T) \Bigg] \frac{t^n}{|\glq|}= \frac{1}{z_\ld}\bigg[\prod_{k=1}^l \bigg(\frac{q^{-k}t^k}{1-q^{-k}}\bigg)^{\ld_k}\bigg]\cdot\bigg[\prod_{i=1}^\infty\frac{1}{1-q^{-k}t}\bigg]
\end{equation}
\end{prop}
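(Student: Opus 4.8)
The plan is to imitate the structure of the proof of Theorem \ref{main}: build a multivariate generating function keeping track of all the cycle-type statistics $X_1, X_2, \dots$ simultaneously, identify it in closed form, and then extract $\binom{X}{\ld}$ by applying the differential operator $(\partial/\partial x)^\ld$ and evaluating at $x = (1,\dots,1)$. The key arithmetic input is the classification of $F$-stable maximal tori in $\glq$ by their ``type'': such a torus decomposes as a product of pieces indexed by the cycle structure of the associated permutation $\s_T$, and for each positive integer $k$ a $k$-cycle contributes a factor corresponding to a norm-one torus in $\mathrm{GL}_1(\ff_{q^k})$. Fulman's formula counts $F$-stable maximal tori of a given type weighted by $1/|\glq|$ (equivalently, via the mass formula $\sum_{T} 1/|T^F| \cdot (\text{something})$, or directly via $|\glq|/|N(T)^F|$), and this is exactly where the factors $q^{-k}/(1-q^{-k})$ and the symmetry factor $z_\ld = \prod_k \ld_k! k^{\ld_k}$ enter: $z_\ld$ is the order of the centralizer of a permutation of cycle type $\ld$ in the symmetric group, accounting for the automorphisms of the type.

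Concretely, first I would set
\[
G(x_1,\dots,x_l,t) := \sum_{n=0}^\infty \Bigg[\sum_{T\in\tq} x_1^{X_1(\s_T)}\cdots x_l^{X_l(\s_T)}\Bigg]\frac{t^n}{|\glq|},
\]
and use the structure theory to write this as an exponential/product over $k$ of the generating functions for the ``$k$-cycle blocks''. The block for index $k$ contributes, for each occurrence, a term $x_k t^k \cdot c_k(q)$ where $c_k(q)$ is the appropriately normalized count of norm-one tori attached to a single $k$-cycle; assembling $m$ indistinguishable such blocks introduces the factor $1/(m! k^m)$ (the $k^m$ being the automorphisms of a single $k$-cycle block, i.e. the cyclic symmetry, and $m!$ permuting the blocks). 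Summing over all $m$ for each $k$ and multiplying over $k$ — together with the ``remaining'' part of $\glq$ contributing $\prod_i 1/(1 - q^{-i} t)$ — gives a clean closed form. The necklace-type bookkeeping here is the exact analogue of the Euler product step \eqref{Euler} in the proof of Theorem \ref{main}, but now organized exponentially because we are weighting by $1/|\glq|$ rather than by plain counting.

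Then, exactly as in the proof of Theorem \ref{main}, I would apply $(\partial/\partial x)^\ld$ and set all $x_k = 1$: differentiating the left side produces $\ld!$ times $\sum_n \big[\sum_T \binom{X}{\ld}(\s_T)\big] t^n/|\glq|$, and differentiating the closed form picks out precisely the $\prod_k (q^{-k}t^k/(1-q^{-k}))^{\ld_k}$ factor with a combinatorial coefficient; combining $\ld!$ with the $k^{\ld_k}$ from the cyclic automorphisms yields $z_\ld = \prod_k \ld_k! k^{\ld_k}$ in the denominator, which is the stated answer. The main obstacle is establishing the structural product/exponential formula for $G$ — that is, correctly computing the normalized mass of $F$-stable maximal tori of each type and verifying the symmetry factors. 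This requires recalling the parametrization of $\glq$-conjugacy classes of $F$-stable maximal tori by conjugacy classes in $S_n$ (via $F$-twisted conjugacy, i.e. the Lang–Steinberg description $H^1(F, W)$ with $W = S_n$), together with the fact that the normalizer quotient and the torus orders combine so that the $\glq$-mass of the type-$\ld$ stratum is $|\glq|/z_\ld \cdot \prod_k(q^k - 1)^{-\ld_k}$ up to the bookkeeping of the ``trivial'' directions; once this mass formula is in hand, the generating-function manipulation is routine and parallels Fulman's treatment of the small cases $\ld = (0), (1), (0,1)$.
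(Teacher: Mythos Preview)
Your proposal is correct and follows essentially the same approach as the paper: establish the multivariate exponential generating function
\[
\sum_{n\ge 0}\Bigg[\sum_{T\in\tq}\prod_{k\ge 1} x_k^{X_k(\s_T)}\Bigg]\frac{t^n}{|\glq|}=\prod_{k\ge 1}\exp\!\bigg[\frac{x_k t^k}{k(q^k-1)}\bigg],
\]
then apply $(\partial/\partial x)^\ld$ and set all $x_k=1$. The only difference is that the paper quotes this exponential formula directly as Fulman's Theorem~3.2 rather than re-deriving it from the Lang--Steinberg parametrization and the mass formula $|\glq|/(z_\ld\prod_k(q^k-1)^{\ld_k})$; note also that the factor $\prod_i 1/(1-q^{-i}t)$ you describe as a ``remaining part of $\glq$'' is not a separate contribution but simply the full exponential product evaluated at $x_k=1$, via the identity $\prod_{k\ge 1}\exp[t^k/(k(q^k-1))]=\prod_{i\ge 1}(1-q^{-i}t)^{-1}$.
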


\begin{proof}
We will use the following result of Fulman (stated as Theorem 3.2 in \cite{Fulman}).
\begin{thm*}[\textbf{Fulman}] With the notation as above,
\begin{equation}
\label{gen prod} \sum_{n=0}^\infty\bigg[\sum_{T\in\tq}\prod_{i=1}^nx_i^{X_i(\s_T)}\bigg]\frac{t^n}{|\glq|}= \prod_{k=1}^\infty \exp{\bigg[\frac{x_k t^k}{(q^k-1)k}\bigg]}
\end{equation}
\end{thm*}
Let $F(\vec{x},t)$ denote both sides of (\ref{gen prod}) as a  formal power series in infinitely many  variables $t$ and $x_1,x_2,\cdots$. We apply the formal differential operator $$(\frac{\partial}{\partial x})^{\ld}:=(\frac{\partial}{\partial x_1})^{\ld_1}(\frac{\partial}{\partial x_2})^{\ld_2}\cdots(\frac{\partial}{\partial x_l})^{\ld_l}$$ to the series $F(\vec{x},t)$ and then evaluate at $x_i=1$ for all $i$. Let $\ld!$ be an abbreviation for $(\ld_1!)(\ld_2!)\cdots(\ld_l!)$. Then
\begin{align*}
\ld!\sum_{n=0}^{\infty} \Bigg[\sum_{T\in \tq} \dbinom{X}{\ld}(\s_T) \Bigg] &\frac{t^n}{|\glq|}=(\frac{\partial}{\partial x})^{\ld}\bigg[F(\vec{x},t)\bigg]_{x_i=1, \forall i} \\
&=(\frac{\partial}{\partial x})^{\ld}\bigg[\prod_{k=1}^\infty \exp{\frac{x_k t^k}{(q^k-1)k}}\bigg]_{x_i=1, \forall i}\\
&=\bigg[\prod_{k=1}^l \bigg(\frac{t^k}{(q^k-1)k}\bigg)^{\ld_k}\bigg]\cdot\bigg[\prod_{k=1}^\infty \exp{\frac{t^k}{(q^k-1)k}}\bigg]\\
&=\bigg[\prod_{k=1}^l \bigg(\frac{t^k}{(q^k-1)k}\bigg)^{\ld_k}\bigg]\cdot\bigg[\prod_{i=1}^\infty \frac{1}{1-q^{-i}t}\bigg]
\end{align*}
where the last equality follows from 
$$\prod_{k=1}^\infty \exp{\frac{t^k}{(q^k-1)k}}=\prod_{i=1}^\infty \frac{1}{1-q^{-i}t}$$
which can be proved by expanding both sides into power series.  
\end{proof}

\subsection{Proof of Theorem \ref{intro main}, (II)}

For each $i$ and $n$, we abbreviate the twisted Betti number as 
\begin{equation}
\label{beta_i(n)}
\beta_i(n):=\dim H^{2i}(\tc;{X\choose \ld})
\end{equation}
 Define a formal power series in $z$ and $t$
\begin{equation}
\label{Psi}
\Psi_\ld(z,t):=\sum_{n=0}^\infty\sum_{i=0}^\infty \frac{\beta_i(n)}{(1-z)(1-z^2)\cdots(1-z^{n})}z^it^n
\end{equation}
We evaluate $\Psi_\ld(z,t)$ at $z=q^{-1}$:
\begin{align*}
\Psi_\ld(q^{-1},t)&= \sum_{n=0}^\infty\frac{1}{(1-q^{-1})(1-q^{-2})\cdots(1-q^{-n})}\sum_{i=0}^\infty {\beta_i(n)}q^{-i}t^n\\
&= \sum_{n=0}^\infty\frac{1}{(q^n-q^{n-1})(q^n-q^{n-2})\cdots(q^n-1)}\bigg[q^{n(n-1)}\sum_{i=0}^\infty {\beta_i(n)}q^{-i}(tq)^n\bigg]\\
&= \sum_{n=0}^\infty\frac{1}{|\glq|)}\sum_{n=0}^\infty\bigg[\sum_{T\in\tq}{X\choose\ld}(\s_T)\bigg](tq^{})^n\ \ \ \ \ \ \ \ \  \ \ \ \ \ \ \ \ \ \ \ \text{by \ref{GL tori}}\\
&=\frac{1}{z_\ld}\bigg[\prod_{k=1}^l \bigg(\frac{t^k}{1-q^{-k}}\bigg)^{\ld_k}\bigg]\cdot\bigg[\prod_{i=1}^\infty\frac{1}{1-q^{1-k}t}\bigg]\ \ \ \ \ \ \ \ \ \ \ \ \ \ \text{by Proposition \ref{count max tori}}
\end{align*}
Since the equality holds for all prime powers $q$, the equality also holds when $q^{-1}$ is replaced by a formal variable $z$ by Lemma \ref{cal 2}.

\qed

\subsection{Proof of Corollary \ref{intro cor}, (II).}
As before, it suffices to consider when $P={X\choose \ld}$. Let $\beta_i(n)$ be as in (\ref{beta_i(n)}) and let $\beta_i$ be $\lim_{n\to\infty}\beta_i(n)$, then we have 
\begin{equation}
\label{gen tori 1}
\lim_{n\to\infty}\sum_{i=0}^{n(n-1)}\frac{\beta_i(n)}{(1-z)(1-z^2)\cdots(1-z^n)}z^i= \sum_{i=0}^\infty \frac{\beta_i}{\prod_{j=1}^\infty (1-z^j)}z^i
\end{equation}

On the other hand, by Lemma \ref{Taylor}, we have 
\begin{align}
\nonumber\lim_{n\to\infty}\sum_{i=0}^{n(n-1)}\frac{\beta_i(n)}{(1-z)(1-z^2)\cdots(1-z^n)}z^i&=\bigg[(1-t)\Psi_\ld(z,t)\bigg]_{t=1}\\
\label{gen tori 2}&=\frac{1}{z_\ld}\prod_{k=1}^l\bigg(\frac{1}{1-z^k}\bigg)^{\ld_k}\cdot\prod_{j=1}^\infty\frac{1}{1-z^j}
\end{align}
Equating (\ref{gen tori 1}) and (\ref{gen tori 2}), we have 
$$\sum_{i=0}^\infty \beta_iz^i=\frac{1}{z_\ld}\prod_{k=1}^l\bigg(\frac{1}{1-z^k}\bigg)^{\ld_k}.$$
The generating function for $\beta_i$ is a rational function in $z$ with denominator  a polynomial of degree $|\ld|$. Thus $\beta_i$ satisfies a linear recurrence relation of length $|\ld|$.

\qed

\end{document}